\documentclass[12pt]{amsart}

\textwidth = 450pt
\textheight = 630pt
\hoffset = -50pt
\voffset = -5pt
\marginparwidth = 58pt

\usepackage{amsfonts,amssymb,amsmath,amsthm}
\usepackage{url}
\usepackage{enumerate}

\urlstyle{sf}
\newtheorem{theorem}{Theorem}[section]
\newtheorem{lemma}[theorem]{Lemma}
\newtheorem{proposition}[theorem]{Proposition}
\newtheorem{corollary}[theorem]{Corollary}
\theoremstyle{definition}
\newtheorem{definition}[theorem]{Definition}

\numberwithin{equation}{section}
\newtheorem{example}[theorem]{Example}

\renewcommand{\le}{\leqslant}
\renewcommand{\ge}{\geqslant}
\renewcommand{\mid}{\::\:}

\def\rk{{\rm rank}\,}

\def\hilb{\mathcal H}

%BOLD SYMBOLS

\def\bbC{\mathbb C}

\def\bbN{\mathbb N}

\def\bbR{\mathbb R}

\def\bbT{\mathbb T}

%SCRIPT SYMBOLS

\def\cB{\mathcal B}
\def\cC{\mathcal C}

\def\cG{\mathcal G}

\def\cM{\mathcal M}
\def\cN{\mathcal N}

\def\cS{\mathcal S}

\def\diag{\mathrm{diag}}
\def\span{\mathrm{span}}

\author[A.~Jafarian]{Ali Jafarian}
\address{University of New Haven, 300 Boston Post Rd., West Haven, CT 06516, USA
}
\email{ajafarian@newhaven.edu}

\author[A.I.~Popov]{Alexey I. Popov${}^1$}
\address{Department of Pure Mathematics, University of Waterloo, 200 University Avenue West, Waterloo, ON, N2L\,3G1, Canada}
\email{a4popov@uwaterloo.ca}

\author[M.~Radjabalipour]{Mehdi Radjabalipour${}^2$}
\address{Department of Pure Mathematics, University of Waterloo, 200 University Avenue West, Waterloo, ON, N2L\,3G1, Canada (On sabbatical from the Iranian Academy of Sciences, Tehran, Iran)}
\email{radjabalipour@ias.ac.ir}

\author[H.~Radjavi]{Heydar Radjavi${}^1$}
\address{Department of Pure Mathematics, University of Waterloo, 200 University Avenue West, Waterloo, ON, N2L\,3G1, Canada}
\email{hradjavi@uwaterloo.ca}

\thanks{${}^1$ Research supported in part by NSERC (Canada)}
\thanks{${}^2$ Research supported in part by the Iranian National Science Foundation}

\keywords{Semigroup of operators, unitary group, commutator, rank, invariant subspace}
\subjclass[2010]{Primary: 47D03, 20M20. Secondary: 47B47, 51F25}

\begin{document}
\baselineskip 18pt

\title[Reducibility of operator semigroups]
{Commutators of small rank and reducibility of operator semigroups}

\begin{abstract}
It is easy to see that if $\cG$ is a non-abelian group of unitary matrices, then for no members $A$ and $B$ of $\cG$ can the rank of $AB-BA$ be one. We examine the consequences of the assumption that this rank is at most two for a general semigroup $\cS$ of linear operators. Our conclusion is that under obviously necessary, but trivial, size conditions, $\cS$ is reducible. In the case of a unitary group satisfying the hypothesis, we show that it is contained in the direct sum $\cG_1\oplus\cG_2$ where $\cG_1$ is at most $3\times 3$ and $\cG_2$ is abelian.
\end{abstract}

\maketitle
\section{Introduction}\label{introduction} 

It is easy to see that if $\cG$ is a non-abelian group of unitary matrices, then for no members $A$ and $B$ of $\cG$ can the rank of $AB-BA$ be one. Indeed, suppose that $A,B\in\cG$ be such that $AB\ne BA$. Then $ABA^{-1}B^{-1}-I=(AB-BA)A^{-1}B^{-1}$. Since $ABA^{-1}B^{-1}$ is a member of $\cG$, it is a unitary matrix, hence it is diagonalizable via a unitary similarity. If the rank of $AB-BA$ were equal to one, exactly one diagonal entry of $ABA^{-1}B^{-1}$ would be different from one, so that $\det(ABA^{-1}B^{-1})$ would be different from one, which is, clearly, a contradiction. In particular, this shows that the condition $\rk(AB-BA)\le 1$ for all $A,B$ in a unitary group $\cG$ implies that $\cG$ is abelian.

For semigroups of matrices and, more generally, linear operators on Banach spaces, the corresponding problem is more difficult. The following result was obtained in \cite[Corollary~2]{RR97}.

\begin{theorem}[\cite{RR97}]\label{first-triang-result}
Let $\cS$ be a semigroup of Schatten $p$-class operators on a Hilbert space. If $\rk(AB-BA)\le 1$ for all $A,B\in\cS$, then $\cS$ is triangularizable. 
\end{theorem}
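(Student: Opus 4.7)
The natural approach is to invoke the Triangularization Lemma, which reduces the problem to establishing reducibility: it suffices to show that under the hypothesis, any such $\cS$ admits a non-trivial closed invariant subspace whenever $\dim\cH>1$. The inheritance of the rank-one commutator condition under restriction to invariant subspaces and passage to quotients is routine, since both operations can only decrease rank, so triangularizability would follow by transfinite induction along a maximal chain of invariant subspaces.

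For reducibility, I would argue by contradiction. If $\cS$ is irreducible and abelian, then, since each element is compact, Ringrose's theorem for commuting families of compact operators forces $\dim\cH=1$; so I assume $\cS$ is irreducible and non-abelian. I pick $A,B\in\cS$ with $[A,B]\neq 0$ and write the rank-one commutator as $[A,B]=x\otimes f$ with $x,f\in\cH$ nonzero. For every $C\in\cS$, the Leibniz-type identities
\begin{align*}
[AC,B] &= A[C,B] + x\otimes(C^*f), \\
[CA,B] &= (Cx)\otimes f + [C,B]A,
\end{align*}
together with the hypothesis that the left-hand sides (and $[C,B]$) have rank at most one, should yield rigid constraints on $Cx$ and $C^*f$. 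The key combinatorial tool is the elementary fact that a sum $u\otimes v+p\otimes q$ of two rank-one operators has rank at most one if and only if $\{u,p\}$ or $\{v,q\}$ is linearly dependent, or the sum vanishes.

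The plan is then to exploit these constraints, sweeping $C$ over $\cS$, to show that one of the orbits $\{Cx:C\in\cS\}$ or $\{C^*f:C\in\cS\}$ fails to span $\cH$, thereby producing a non-trivial closed invariant subspace for $\cS$ or for $\cS^*$, and contradicting irreducibility. The Schatten $p$-class hypothesis should enter crucially through compactness: it furnishes eigenvectors and spectral projections for operators in the algebra generated by $\cS$, allowing pointwise rank conditions to be promoted to a genuine invariant subspace. The principal obstacle I anticipate is the branching case analysis --- each rank-one constraint splits into several alternatives, and controlling the degenerate branches (such as $A[C,B]=0$ for certain $C$, or $Cx$ being a scalar multiple of $x$ for many $C$) uniformly across the whole semigroup is the core technical difficulty.
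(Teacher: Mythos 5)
First, a point of reference: the paper does not prove this statement at all --- it is quoted from \cite{RR97} as background, so there is no internal proof to compare against. Judged on its own terms, your proposal is a strategy outline rather than a proof, and the genuine gap sits exactly where you place ``the core technical difficulty.'' The reduction to reducibility via the Triangularization Lemma, the inheritance of the rank condition by quotients, and the disposal of the abelian case are all fine, and the identities $[AC,B]=A[C,B]+x\otimes(C^*f)$ and $[CA,B]=(Cx)\otimes f+[C,B]A$ are correct. But the entire content of the theorem lies in converting the resulting family of constraints --- for each $C\in\cS$, a sum of a rank-$\le 1$ operator and a rank-one operator must again have rank $\le 1$ --- into the statement that some orbit fails to span. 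Each instance of your dichotomy (left factors dependent, or right factors dependent, or the sum vanishes) branches independently as $C$ varies, and nothing in the proposal shows these branches can be made coherent across the semigroup; the degenerate cases you yourself flag ($A[C,B]=0$, $[C,B]=0$, $Cx\in\span\{x\}$ for many $C$) are precisely the ones that defeat naive versions of this argument. No contradiction is actually derived, so as written this is a plan, not a proof.

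It is also worth recording that the argument in \cite{RR97} takes a genuinely different route from the one you sketch: one first uses the known fact that \emph{two} compact operators whose commutator has rank at most one are simultaneously triangularizable, and then invokes the ``local to global'' machinery of that paper, which upgrades such pairwise triangularizability to triangularizability of the whole semigroup for Schatten-class operators; the Schatten hypothesis enters there through trace arguments, not merely through the existence of eigenvectors (see also \cite[Theorem 9.2.10]{RR00} for the compact Banach-space version). A direct orbit-and-rank-constraint analysis of the kind you propose \emph{can} be pushed through, but the full case analysis is substantial; it is carried out for general Banach-space operators in \cite{Drn09}, and comparing your outline with that proof will show how much work your plan still owes.
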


This was generalized to compact operators on arbitrary Banach spaces in \cite[Theorem 9.2.10]{RR00}. For non-compact operators, this question was studied in a series of papers. In \cite[Lemma~5]{Cig}, the authors showed that the same conclusion holds for semigroups of algebraic operators, and in \cite{Drn00}, it was shown that every non-commutative doubly generated semigroup $\cS$ with the condition that $\rk(AB-BA)\le 1$ for all $A,B\in\cS$ has a hyperinvariant subspace. Finally, it was generalized to arbitrary operators on Banach spaces in~\cite{Drn09} as follows:

\begin{theorem}[\cite{Drn09}]\label{rk-one-general}
Let $X$ be a Banach space of dimension at least two. Let $\cS$ be a non-commutative semigroup of operators on~$X$. If $\rk(AB-BA)\le 1$ for all $A,B\in\cS$ then $\cS$ is reducible. 
\end{theorem}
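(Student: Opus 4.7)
The plan is to argue by contradiction: assume $\cS$ is irreducible and derive a contradiction with the non-commutativity. Choose $A, B \in \cS$ with $C := AB - BA \neq 0$; by hypothesis $\rk C = 1$, so we may write $C = u \otimes \varphi$ for some nonzero $u \in X$ and $\varphi \in X^*$. Since $C$ is a finite-rank commutator of bounded operators its trace vanishes, giving $\varphi(u) = 0$; in particular $C$ is nilpotent and $u \in \ker \varphi$.

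Next we produce two natural candidate $\cS$-invariant subspaces from the pair $(u, \varphi)$. The closed subspace
\[
M := \overline{\operatorname{span}}\bigl(\{u\} \cup \cS u\bigr)
\]
is $\cS$-invariant and nonzero, so irreducibility forces $M = X$; equivalently, $u$ is cyclic for $\cS \cup \{I\}$. Dually,
\[
N := \{x \in X : \varphi(x) = 0 \text{ and } \varphi(Tx) = 0 \text{ for every } T \in \cS\}
\]
is $\cS$-invariant and lies in the proper subspace $\ker \varphi$, so irreducibility forces $N = 0$. Under these extremal assumptions the rank-one data $(u, \varphi)$ must propagate through the whole semigroup.

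The key step is to press the rank hypothesis against longer commutators. For each $S \in \cS$ one has the standard identities
\[
[SA, B] = SC + [S, B]A, \qquad [AS, B] = CS + A[S, B],
\]
all of whose terms, as well as $[S, B] =: v_S \otimes \psi_S$, have rank at most one. Substituting $SC = (Su) \otimes \varphi$ and $CS = u \otimes (\varphi S)$, the rank-$\le 1$ bounds become explicit linear-dependence relations among $\{Su, v_S\}$ and $\{\varphi, \psi_S A\}$ (respectively $\{u, Av_S\}$ and $\{\varphi S, \psi_S\}$). Iterating, and pairing $S$ with $A$ in place of $B$, we aim to establish a dichotomy: each $S \in \cS$ satisfies $Su \in \bbC u$ or $\varphi S \in \bbC \varphi$. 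This partitions $\cS$ into sub-semigroups $\cS_1, \cS_2$ preserving respectively the line $\bbC u$ and the hyperplane $\ker \varphi$.

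The main obstacle will be converting this local dichotomy into a global $\cS$-invariant subspace. In the pure cases $\cS = \cS_i$ the line $\bbC u$ or the hyperplane $\ker \varphi$ is automatically $\cS$-invariant, contradicting irreducibility. In the genuinely mixed case we must exploit products $S_1 S_2$ with $S_i \in \cS_i$, together with the compatibility $u \in \ker \varphi$ coming from the trace-zero condition, and likely reduce to the doubly generated situation handled in \cite{Drn00} to produce a common invariant subspace. The bookkeeping required to handle the degenerate sub-cases (e.g.\ when $v_S = 0$ or the dependence collapses trivially) and to glue the partition into a bona fide invariant subspace is what we expect to be the technical heart of the argument.
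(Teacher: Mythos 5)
This theorem is quoted in the paper from \cite{Drn09} without proof, so there is no internal argument to compare against; judged on its own terms, your proposal has both an explicit incompleteness and a concrete false step. The incompleteness first: the entire core of the argument --- deriving the dichotomy ``$Su\in\bbC u$ or $\varphi S\in\bbC\varphi$ for every $S\in\cS$'' from the commutator identities, and then converting that pointwise dichotomy into a single $\cS$-invariant subspace --- is only announced (``we aim to establish'', ``likely reduce to'', ``what we expect to be the technical heart''). These are precisely the steps where Drnov\v{s}ek's proof does all of its work, so as written the proposal proves nothing.

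The false step is more serious because it would survive even if you filled in the sketch. You claim that a finite-rank commutator of bounded operators has vanishing trace, hence $\varphi(u)=0$ and $C$ is nilpotent. This is true in finite dimensions but false for bounded operators on an infinite-dimensional space: for the unilateral shift $S$ one has $S^*S-SS^*=e_1\otimes e_1$, a rank-one commutator with trace $1$. So you cannot assume $u\in\ker\varphi$, and everything downstream that uses the ``compatibility $u\in\ker\varphi$'' is unjustified. In Drnov\v{s}ek's treatment the non-nilpotent case ($\varphi(u)\neq 0$) is a genuine case that must be handled by a separate argument (there $C^2=\varphi(u)\,C\neq 0$ and one works with the resulting spectral/eigenvector structure), not excluded a priori. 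If you want to pursue this route, you must either prove that the standing hypothesis $\rk(AB-BA)\le 1$ for \emph{all} pairs in $\cS$ forces the relevant commutator to be nilpotent, or split the proof into the two cases from the outset.
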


It is natural to try to replace the rank-one condition in the above statements with the condition $\rk(AB-BA)\le r$, where $r\in\bbN$ is fixed. The following quick example shows that one cannot expect the same answer as in Theorem~\ref{rk-one-general} even for semigroups of finite-rank operators.

\begin{example}\label{E_ij}
Let $\hilb$ be a finite- or infinite-dimensional Hilbert space. For all $i,j=1,2,\dots$, denote the $i,j$-matrix unit by $E_{ij}$ . That is, for a fixed orthonormal basis $(e_i)$, we have $E_{ij}(e_k)=\delta_{jk}e_i$. The semigroup
$$
\cS=\{E_{ij}\mid i,j\in\bbN\}\cup\{0\}
$$
is an irreducible semigroup of operators of rank $\le 1$ such that $\rk(AB-BA)\le 2$ for all $A,B\in\cS$.
\end{example}

In the present paper, we obtain results regarding the following question: when does the assumption $\rk(AB-BA)\le 2$ for all operators $A$ and $B$ in a semigroup $\cS$ imply reducibility of~$\cS$? Our main argument uses special unitary groups whose structure is also of some independent interest and is a subject of study of the last section of this paper.

Throughout the paper, the linear space $\bbC^n$ is considered as a Hilbert space with the standard inner product $\langle\cdot,\cdot\rangle$. In the case of infinite dimensional spaces, the term \emph{operator} is reserved for the bounded linear operators. The set of operators on a Banach space $X$ is denoted by $\cB(X)$. The term \emph{invariant subspace} means a non-trivial invariant subspace. A \emph{semigroup} is a set $\cS$ of operators on $X$ such that $AB\in\cS$ for all $A,B\in\cS$. A semigroup $\cS\subseteq\cB(X)$ is \emph{reducible} if it admits an invariant subspace, and it is \emph{triangularizable} if there exists a chain $\cC$ that is maximal as a chain of  subspaces of $X$ and that has the property that every member of $\cC$ is $\cS$-invariant (see \cite[Definition 7.1.1]{RR00}). A semigroup $\cS\subseteq\cB(X)$ is \emph{irreducible} if it is not reducible. The symbol $\diag\{\alpha_1,\alpha_2,\dots,\alpha_n\}$ denotes the $n\times n$ diagonal matrix with $\alpha_1,\alpha_2,\dots,\alpha_n$ on the diagonal. The symbol $\mathrm{nul}(A)$ denotes the dimension of $\ker A$. Finally, we will write $A\equiv B$ if the matrices $A$ and $B$ are unitarily similar.

%============================================================================
%---------------------------------NEW-SECTION--------------------------------
%============================================================================

\section{Reducibility of Semigroups}\label{main-section} 

We will start by investigating the structure of certain very special groups of unitaries. 

\begin{definition}\label{G(p,q,A)}
Let $p$ and $q$ be two prime numbers. The symbol $\cG(p,q,A)$ will denote the group of unitaries generated by the $p\times p$ matrices
$$
S=
\begin{bmatrix}
0 & 0 & \dots & 0 & 1\\
1 & 0 & \dots & 0 & 0\\
0 & 1 & \dots & 0 & 0\\
\vdots &  &  &  & \\
0 & 0 & \dots & 1 & 0
\end{bmatrix}
\quad
\mbox{and}
\quad
A=\begin{bmatrix}
\omega_1 & 0 & \dots & 0 & 0\\
0 & \omega_2 & \dots & 0 & 0\\
\vdots &  &  & & \\
0 & 0 & \dots &  \omega_{p-1} & 0\\
0 & 0 & \dots & 0 & \omega_p
\end{bmatrix},
$$
where $A$ is not a scalar multiple of the identity and $\omega_i^q=1$ for all $i=1,2,\dots,p$.
\end{definition}

Our interest in these groups stems from the fact that if $\cG$ is a minimal non-abelian group of matrices, then $\cG$ admits a subgroup $\cG_0$ whose restriction to a $\cG_0$-invariant subspace is closely related to a group of the form $\cG(p,q,A)$ (see \cite[Lemma~4.2.9]{RR00}). 

\begin{proposition}\label{p-q-restrictions}
Let $p,q$ be two prime numbers and $A$ be a $p\times p$ matrix as in Definition~\ref{G(p,q,A)}. If $\rk(XY-YX)\le 2$ for all $X,Y\in\cG(p,q,A)$, with the equality achieved on some members of it, then either
\begin{enumerate}
\item $p=2$ or
\item $p=3$ and $q=2$.
\end{enumerate}
\end{proposition}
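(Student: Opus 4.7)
The plan is first to normalize every element of $\cG(p,q,A)$ as $D S^n$, where $D$ lies in the abelian subgroup $\cD$ generated by the cyclic shifts $A_k := S^k A S^{-k}$; this is possible because $S A_k = A_{k+1} S$, so any word in the generators can be reshuffled into the shape $D S^n$. Writing $D^{(m)} := S^m D S^{-m}$, a direct computation yields
\[
[D_1 S^{n_1},\, D_2 S^{n_2}] = \bigl(D_1 D_2^{(n_1)} - D_2 D_1^{(n_2)}\bigr)\, S^{n_1+n_2},
\]
and the rank of this operator equals the number of nonzero diagonal entries of $D_1 D_2^{(n_1)} - D_2 D_1^{(n_2)}$. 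Every rank computation in $\cG(p,q,A)$ is thereby reduced to a combinatorial count involving the entries $\omega_i$ of $A$ and their cyclic shifts.

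Applying this to $[A,S]$, the rank equals the number of cyclic indices $i$ with $\omega_i \ne \omega_{i-1}$. Since these consecutive differences sum to zero, the hypothesis forces exactly two of them to be nonzero (zero would make $A$ scalar). Hence, after a cyclic relabeling, $A$ has the two-block form
\[
A = \diag(\underbrace{\alpha,\ldots,\alpha}_{a},\underbrace{\beta,\ldots,\beta}_{b}), \quad a+b=p,\ \alpha \ne \beta,\ \alpha^q = \beta^q = 1,
\]
and $\zeta := \alpha/\beta$ is a primitive $q$-th root of unity.

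The bulk of the proof then consists of producing a commutator of rank greater than $2$ in every unwanted case. For $p \ge 5$ prime: if $\min(a,b) \ge 2$, I would compute $[A, AS^2] = A(A-A_2)S^2$, whose rank equals the size of the cyclic symmetric difference of $\{1,\ldots,a\}$ and $\{3,\ldots,a+2\}$, and this size is $4$ whenever both blocks have length at least $2$. If $\min(a,b)=1$, I would instead compute $[AS,\, A_1 S^{p-1}] = (A_0 A_2 - A_1 A_{p-1})\, S^p$. In the sub-case $a=1$ the products $A_0 A_2$ and $A_1 A_{p-1}$ take the value $\alpha\beta$ exactly on $\{1,3\}$ and $\{2,p\}$ respectively, with background $\beta^2$; in the symmetric sub-case $b=1$ they take $\alpha\beta$ on $\{2,p\}$ and $\{1,p-1\}$, with background $\alpha^2$. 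In both situations the two $\alpha\beta$-pairs are disjoint when $p \ge 4$, so the difference has four nonzero diagonal entries. This rules out $p \ge 5$.

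For $p = 3$, the block decomposition is forced to be $A = \diag(\alpha,\alpha,\beta)$ up to a cyclic shift, so I would compute $[A^2 S,\, A_1 S^2] = (A^2 A_2 - A_1 A_2^2)\, S^3$. Writing $\alpha = \zeta\beta$ and factoring, the diagonal of this commutator becomes proportional to $(\zeta,\ 1,\ -(\zeta+1))$. The first two entries are always nonzero, while the third vanishes precisely when $\zeta = -1$, i.e., when $q = 2$. Hence $p = 3$ with $q \ge 3$ yields a commutator of rank $3$, contradicting the hypothesis; the only admissible cases are $p=2$ and $(p,q)=(3,2)$, as claimed. The main obstacle I anticipate is locating the twisted commutator $[AS, A_1 S^{p-1}]$ needed in the $\min(a,b)=1$ sub-case: every "monomial" commutator $[A^m, A^{m'} S^k]$ reduces to a difference of two shifts of $A$, each differing from $A$ in just two positions when one block is a singleton, so such commutators have rank at most $2$; only by introducing a second independent shift can one force the two defects to separate into four distinct positions.
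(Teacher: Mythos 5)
Your proof is correct: the normal form $DS^n$, the commutator identity $[D_1S^{n_1},D_2S^{n_2}]=(D_1D_2^{(n_1)}-D_2D_1^{(n_2)})S^{n_1+n_2}$, and each of the rank counts (the symmetric difference $\{1,2\}\cup\{a+1,a+2\}$ for $\min(a,b)\ge 2$; the disjoint pairs $\{1,3\}$ vs.\ $\{2,p\}$ and $\{2,p\}$ vs.\ $\{1,p-1\}$ for $\min(a,b)=1$, $p\ge 5$; and the diagonal proportional to $(\zeta,1,-(\zeta+1))$ for $p=3$) all check out. The route is genuinely different from the paper's, though closely parallel. The paper never analyzes the generator $A$ itself: it observes that every multiplicative commutator $XYX^{-1}Y^{-1}$ is diagonal with determinant one, hence under the rank hypothesis of the form $\diag(1,\dots,\omega,\dots,\bar\omega,\dots,1)$ with exactly two non-unit entries, and then plays such an element $A_0$ off against $S$ and $S^2$ to produce commutators of rank $4$ (for $p\ge 5$) or $3$ (for $p=3$, $q>2$). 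You instead constrain $A$ directly: $\rk(AS-SA)$ counts the cyclic positions where consecutive diagonal entries of $A$ differ, forcing the two-block form, and all subsequent commutators are built from $A$ and its conjugates by powers of $S$. The two arguments are dual in flavor --- the paper's distinguished element has two exceptional entries where yours has two blocks, and the case splits correspond ($\omega$ and $\bar\omega$ adjacent or not, versus $\min(a,b)=1$ or $\ge 2$). Your version avoids any appeal to the determinant-one structure of the commutator subgroup; the paper's version has slightly lighter bookkeeping because its element $\diag(\omega,\bar\omega,1,\dots,1)$ and its shifts can only overlap in at most two positions. The ``obstacle'' you flag in the $\min(a,b)=1$ case is real, and your fix via the twisted commutator $[AS,\,A_1S^{p-1}]$ resolves it correctly.
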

\begin{proof}
Denote the group $\cG(p,q,A)$ by $\cG$, for simplicity of notations. It is not hard to see that every member of $\cG$ can be written in the form $DS^k$ where $D$ is a diagonal matrix whose diagonal entries are $q$-roots of unity, $S$ is the cyclic permutation as in Definition~\ref{G(p,q,A)}, and $0\le k<p$. Moreover, if $X_1=D_1S^{k_1}$ and $X_2=D_2S^{k_2}$, then $X_1X_2=D_3S^{k_1+k_2}$, for some diagonal matrix $D_3$.

Let $X$ and $Y$ be arbitrary members of $\cG$. It follows from the above observation that $XYX^{-1}Y^{-1}$ is a diagonal matrix. It is clear that if $\rk(XY-YX)=2$, then exactly two eigenvalues of $XYX^{-1}Y^{-1}$ are not equal to one. Since $\det(XYX^{-1}Y^{-1})=1$, we conclude that $XYX^{-1}Y^{-1}$ is of the form $\diag(1,\dots,1,\omega,1,\dots,1,\bar\omega,1,\dots,1)$, where $\omega\ne 1$ and $\omega^q=1$, and each of the series of ones between $\omega$ and $\bar\omega$ could be absent.

Observe that if $D=\diag(d_1,\dots,d_{p-1},d_p)$, then $SDS^{-1}=\diag(d_p,d_1,\dots,d_{p-1})$. It follows that $\cG$ has a member of the form
$$
A_0=\diag(\omega,1,\dots,1,\bar\omega,1\dots,1),
$$
where $\omega\ne 1$, $\omega^q=1$, and the series of ones between $\omega$ and $\bar\omega$ is shorter than the series of ones following $\bar\omega$.

Suppose that $p>3$, so that $p\ge 5$. If the series of ones between $\omega$ and $\bar\omega$ is not absent, then consider $B=SA_0^{-1}S^{-1}$. It follows that 
$$
A_0B=A_0SA_0^{-1}S^{-1}=\diag(\omega,\bar\omega,1\dots,1,\bar\omega,\omega,1\dots,1),
$$
so that $\rk(A_0S-SA_0)=\rk(A_0SA_0^{-1}S^{-1}-I)=4$, contrary to the assumptions. So, the series of ones between $\omega$ and $\bar\omega$ must be absent, and 
$$
A_0=\diag(\omega,\bar\omega,1\dots,1).
$$
However, in this case we may consider $C=S^2A_0^{-1}S^{-2}$. We get
$$
A_0C=A_0S^2A_0^{-1}S^{-2}=\diag(\omega,\bar\omega,\bar\omega,\omega,1\dots,1),
$$
so that $\rk(A_0S^2-S^2A_0)=4$.

This shows that either $p=2$ or $p=3$. Suppose that $p=3$. We claim that, necessarily, $q=2$. Assume that $q>2$. Then, by the same argument as above,
$$
A_0=\diag(\omega,\bar\omega,1)\in\cG,
$$
where $\omega\ne 1$ and $\omega^q=1$. Clearly, $SA_0^{-1}S^{-1}=\diag(1,\bar\omega,\omega)$, so that 
$$
A_0SA_0^{-1}S^{-1}=\diag(\omega,\bar\omega^2,\omega).
$$
If $q>2$, then all the diagonal entries of this matrix are different from~$1$, so that $\rk(A_0S-SA_0)=3$, a contradiction.
\end{proof}

The next proposition records certain observations about the groups $\cG$ satisfying $\rk(XY-YX)\le 2$ for all $X,Y\in\cG$. We will need the following notation.

\begin{definition}\label{G(M)}
Let $\cS$ be a set of $n\times n$ matrices and $\cM$ be a linear subspace of $\bbC^n$. Then we put
$$
\cS(\cM)=\{T\in\cS\mid T\cM\subseteq\cM\}.
$$
\end{definition}

\begin{proposition}\label{t.HZ}
Let $\mathcal G$ be a non-abelian group of unitary $n\times n$ matrices, and assume ${\rm rank}(AB-BA)\leq 2$ for all $A,B \in \mathcal G$. If $\mathcal M$ is a linear subspace of $\bbC^n$, then $\mathcal G(\mathcal M)=\mathcal G(\mathcal M^\perp)$ is a subgroup of $\cG$ and at least one of the unitary groups $\mathcal G(\mathcal M)|_{\mathcal M}$ or $\mathcal G(\mathcal M)|_{\mathcal M^\perp}$ is abelian.
\end{proposition}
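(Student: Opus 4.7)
The plan is to handle the two assertions in turn. For the identity $\cG(\cM) = \cG(\cM^\perp)$ and the subgroup property, the argument is quick: any $U \in \cG$ is unitary on the finite-dimensional space $\bbC^n$, so if $U\cM \subseteq \cM$ then $U|_\cM$ is injective, hence surjective on $\cM$, giving $U\cM = \cM$, and unitarity forces $U\cM^\perp = \cM^\perp$. The reverse inclusion is the same argument applied to $\cM^\perp$, and closure under products and inverses is then immediate.

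For the main assertion I would argue by contradiction, assuming both $\cG_1 := \cG(\cM)|_\cM$ and $\cG_2 := \cG(\cM)|_{\cM^\perp}$ are non-abelian. Every $A \in \cG(\cM)$ splits as $A = A|_\cM \oplus A|_{\cM^\perp}$, so for each pair $A, B \in \cG(\cM)$,
$$
\rk(AB-BA) \;=\; \rk(A|_\cM B|_\cM - B|_\cM A|_\cM) \;+\; \rk(A|_{\cM^\perp} B|_{\cM^\perp} - B|_{\cM^\perp} A|_{\cM^\perp}).
$$
The determinant argument reproduced in the paper's introduction applies verbatim inside each of the unitary groups $\cG_1$ and $\cG_2$: a nonzero commutator in a group of unitaries has rank at least $2$. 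Combined with the hypothesis $\rk(AB-BA) \le 2$, this yields the key dichotomy: for every pair $A, B \in \cG(\cM)$, at most one of the two restricted commutators is nonzero, and when nonzero it has rank exactly $2$.

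The crux is then to produce a single $B \in \cG(\cM)$ that is non-central on both $\cM$ and $\cM^\perp$ simultaneously. The sets of $A \in \cG(\cM)$ whose restriction to $\cM$ (respectively $\cM^\perp$) lies in the center of $\cG_1$ (respectively $\cG_2$) are proper subgroups of $\cG(\cM)$, because $\cG_1$ and $\cG_2$ are non-abelian. Since no group is the union of two proper subgroups, some $B$ lies outside both. I then choose $B' \in \cG(\cM)$ so that $B'|_\cM$ does not commute with $B|_\cM$, and $C \in \cG(\cM)$ so that $C|_{\cM^\perp}$ does not commute with $B|_{\cM^\perp}$; the dichotomy forces $B'|_{\cM^\perp}$ to commute with $B|_{\cM^\perp}$ and $C|_\cM$ to commute with $B|_\cM$. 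The identity $B'CB - BB'C = -B'(BC - CB) - (BB' - B'B)C$ then contributes rank $2$ on $\cM$ (the first summand vanishes and the second survives as a rank-$2$ operator times a unitary) and rank $2$ on $\cM^\perp$ (symmetrically), yielding $\rk(B'CB - BB'C) \ge 4$, contradicting the hypothesis. The main obstacle, and the essential trick, is pinpointing this single element $B$ that witnesses non-commutativity on both summands at once; the union-of-two-subgroups observation is precisely what packages two separate rank-$2$ contributions into one oversized commutator.
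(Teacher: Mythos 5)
Your proof is correct, but it reaches the contradiction by a genuinely different route than the paper. Both arguments boil down to manufacturing a commutator of two elements of $\cG(\cM)$ that is nonzero on both blocks of $\bbC^n=\cM\oplus\cM^\perp$, hence of rank at least $4$ by additivity of rank across the block decomposition together with the introduction's observation that a nonzero commutator in a unitary group has rank at least $2$. The difference is in how the witnesses are produced. The paper starts from a pair whose $\cM$-parts fail to commute, shows by a three-element argument that the $\cM^\perp$-parts of any such pair must then lie in the centre of $\cG(\cM)|_{\cM^\perp}$ (and symmetrically for the other block), and finally combines four elements into the products $A_1A_3$ and $A_2A_4$, whose commutator survives on both blocks precisely because of those centrality facts. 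You instead apply the observation that a group is never the union of two proper subgroups to the preimages of the two centres under the restriction homomorphisms, which hands you a single $B$ that is non-central on both $\cM$ and $\cM^\perp$ at once; the commutator of $B'C$ with $B$, expanded via the identity $B'CB-BB'C=-B'(BC-CB)-(BB'-B'B)C$, then contributes rank at least $2$ on each block. Your version is shorter and isolates the group-theoretic content in one clean lemma; the paper's avoids that lemma at the cost of a longer chain of symmetrical case analysis. All the auxiliary steps in your write-up check out: the preimages of the centres are indeed proper subgroups since $\cG_1$ and $\cG_2$ are assumed non-abelian, $B'C\in\cG(\cM)$ because $\cG(\cM)$ is a subgroup, and the forced commutations of $B'|_{\cM^\perp}$ with $B|_{\cM^\perp}$ and of $C|_\cM$ with $B|_\cM$ follow from your dichotomy.
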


\begin{proof} If $n\le 2$, then the conclusions of the proposition are evident. Therefore, we will assume in the proof that $n\ge 3$.

Let $A,B\in \mathcal G$. Since $ABA^{-1}B^{-1}$ is a unitary and $\rk(ABA^{-1}B^{-1}-I)={\rm rank}(AB-BA)\leq 2$, it follows  from the first paragraph of the introduction that $\rk(AB-BA)$ is $0$ or $2$, and hence $ABA^{-1}B^{-1}\equiv {\rm diag}(\omega,\omega', 1,1,\cdots,1)\neq I$ for some $\omega\neq 1\ne\omega'$. Also, since $1=\det (ABA^{-1}B^{-1})=\omega\omega'$, it follows  that  $\omega'=\bar\omega$.  

Next, assume $\mathcal M$ is a linear subspace of $\bbC^n$. Clearly, $\mathcal G(\mathcal M)=\mathcal G(\mathcal M^\perp)$. Assume, if possible, that both $\mathcal G(\mathcal M)|_{\mathcal M}$ and $\mathcal G(\mathcal M)|_{\mathcal M^\perp}$  are non-abelian. For $i=1,2$,  choose $A_i=C_i\oplus D_i\in \mathcal G(\mathcal M)$  decomposed according to $\bbC^n=\mathcal M\oplus\mathcal M^\perp$,  such that $C_1C_2\neq C_2C_1$. Notice that the condition $D_1D_2\ne D_2D_1$ would imply $\rk(A_1A_2-A_2A_1)=4$, hence $D_1D_2=D_2D_1$. Assume, if possible, that $D_1$ is  not in the centre of $\mathcal G(\mathcal M)|_{\mathcal M^\perp}$. In this case, choose $A_3=C_3\oplus D_3\in\mathcal G(\mathcal M)$ such that $D_1D_3\neq D_3D_1$ and, consequently, $C_3C_1=C_1C_3$.   Then  $C_1(C_2C_3)\neq C_2C_1C_3=(C_2C_3)C_1$ and $D_1(D_2D_3)=D_2D_1D_3\neq (D_2D_3)D_1$, which is a contradiction. Thus,  $D_1$ and, by symmetry, $D_2$ belong to the centre of $\mathcal G(\mathcal M)|_{\mathcal M^\perp}$.   Now, since  $\mathcal G(\mathcal M)|_{\mathcal M^\perp}$ is not abelian, there exist $A_3=C_3\oplus D_3$ and $A_4=C_4\oplus D_4$ in $\mathcal G(\mathcal M)$ such that $C_3C_4=C_4C_3$ and $D_3D_4\neq D_4D_3$.  Another symmetrical argument reveals that $C_3,C_4$ belong to the centre of $\mathcal G(\mathcal M)|_{\mathcal M}$. Then $(C_1C_3)(C_2C_4)=C_1C_2C_3C_4\neq  C_2C_1C_3C_4=(C_2C_4)(C_1C_3)$ and $(D_1D_3)(D_2D_4)=D_2D_3D_4D_1\neq D_2D_4D_3D_1=(D_2D_4)(D_1D_3)$ and, hence, rank$[(A_1A_3)(A_2A_4)-(A_2A_4)(A_1A_3)] =4$; a contradiction. 
\end{proof}

Before we state our main theorem, we need two lemmas.

\begin{lemma}\label{t.G0p3q2} 
Let $\mathcal G$ be a non-abelian unitary group on $\bbC^n$ and $\cN\subseteq\bbC^n$ be a 3-dimensional subspace. Assume that $\cG$ has a subgroup $\cG_0$ such that $\cN$ is $\cG_0$-invariant and, in some basis $\{e_1,e_2,e_3\}$ of $\cN$, $\cG_0|_{\cN}=\cG(3,q,A)$, where $q$ is a prime number and $A$ a diagonal matrix as in Definition~\ref{G(p,q,A)}. If $\rk(XY-YX)\le 2$ for all $X,Y\in\cG$, then $\cN$ is $\cG$-invariant. Moreover, $\mathcal G|_{\cN}$ is irreducible and $\mathcal G|_{\cN^\perp}$ is abelian.
\end{lemma}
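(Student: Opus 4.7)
The plan proceeds in three stages: preliminary reductions using Propositions~\ref{p-q-restrictions} and~\ref{t.HZ}, the main argument that $\cN$ is $\cG$-invariant, and a short deduction of the remaining conclusions. For the preliminary reductions: the rank-two hypothesis descends to $\cG_0|_\cN=\cG(3,q,A)$, and the argument from the introduction rules out rank-one commutators in any non-abelian unitary group, so some pair in $\cG_0|_\cN$ must realise rank exactly $2$. Proposition~\ref{p-q-restrictions} (with $p=3$) then forces $q=2$, so $\cG_0|_\cN=\cG(3,2,A)$. Next, Proposition~\ref{t.HZ} with $\cM=\cN$: since $\cG(\cN)|_\cN\supseteq\cG_0|_\cN$ is non-abelian, the proposition forces $\cG(\cN)|_{\cN^\perp}$ to be abelian.

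The core step is to prove $\cG\subseteq\cG(\cN)$. Suppose for contradiction that some $g\in\cG$ satisfies $g\cN\ne\cN$. The plan is to produce enough elements of $\cG$ whose commutators with $g$ have controlled rank, and which pin $g$ down sharply. The central construction is a supply of involutions $\tilde C\in\cG$ of the form $\tilde C=I-2P_V$ with $V\subseteq\cN$ two-dimensional: these arise as commutators $\tilde C=[\tilde A,\tilde S^k]$ of lifts $\tilde A,\tilde S\in\cG_0$ of $A$ and $S$. Abelianness of $\cG_0|_{\cN^\perp}$ together with the total rank bound forces the $\cN^\perp$-restriction to be $I$, while a direct matrix computation in the given basis of $\cN$ gives the $\cN$-restriction as a diagonal $\pm 1$ involution with two-dimensional $-1$-eigenspace. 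For any such $\tilde C$, the identity $g\tilde C g^{-1}-\tilde C=-2(P_{gV}-P_V)$ together with the rank hypothesis gives $\rk(P_V-P_{gV})\le 2$, and the principal-angle formula $\rk(P_V-P_{V'})=2(\dim V-\dim(V\cap V'))$ for equidimensional subspaces forces $\dim(V\cap gV)\ge 1$.

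Applying this to the three involutions $\tilde C_1,\tilde C_2,\tilde C_3$ whose $-1$-eigenspaces $V_1,V_2,V_3$ are the three coordinate $2$-subspaces of $\cN$ (with pairwise intersections $\bbC e_j$), each line $L_i:=V_i\cap gV_i$ sits inside $\cN\cap g\cN$; if the $L_i$ are linearly independent they span $\cN$, immediately yielding $g\cN=\cN$, a contradiction. The residual linearly dependent configurations are then eliminated by combining further data: the mixed commutators $[\tilde C_i, g\tilde C_j g^{-1}]$, whose rank equals $\rk[P_{V_i},P_{gV_j}]\le 2$ and encodes mutual-position constraints between $V_i$ and $gV_j$; and, when available, additional low-rank elements coming from $[\cG_0,\cG_0]$ (for instance, in the sub-case where a three-cycle of $\cG(3,2,A)$ lies in its commutator subgroup, a cyclic element $\tilde T\in\cG_0\cap\ker(\cG_0\to\cG_0|_{\cN^\perp})$ with $\tilde T|_\cN=S$ whose $\zeta$- and $\zeta^2$-eigenspaces are one-dimensional and contained in $\cN$ yields further rank-two restrictions on $g$). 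The combined constraints force $g$ to fix each of the coordinate lines $\bbC e_1,\bbC e_2,\bbC e_3$, hence to map $\cN$ to itself.

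Once $\cG\subseteq\cG(\cN)$ is established, the remaining conclusions are immediate: $\cG|_\cN\supseteq\cG_0|_\cN=\cG(3,2,A)$ is irreducible on $\cN$ (the generators $S$ and $A$ admit neither a common eigenvector nor a common $2$-dimensional invariant subspace on $\bbC^3$), so $\cG|_\cN$ is irreducible; and $\cG|_{\cN^\perp}=\cG(\cN)|_{\cN^\perp}$ is abelian by the preliminary reduction. The main obstacle is the dependency analysis of the lines $L_i$ in the core step: the three involutions alone give only $\dim(V_i\cap gV_i)\ge 1$, which admits genuine non-trivial configurations, and eliminating them requires a careful use of mixed commutators and, in the case where no three-cycle is available in $[\cG_0,\cG_0]\cap\ker(\cG_0\to\cG_0|_{\cN^\perp})$, a more delicate argument exploiting the irreducibility of both $\cG_0|_\cN$ and $g\cG_0 g^{-1}|_{g\cN}$.
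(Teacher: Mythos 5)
Your preliminary reductions are sound and match the paper's: the rank-two hypothesis plus non-abelianness forces $q=2$ via Proposition~\ref{p-q-restrictions}; the involutions $\tilde C=I-2P_{V}$ with $V$ a coordinate $2$-plane of $\cN$ and $\tilde C|_{\cN^\perp}=I$ do arise as commutators of lifts of $A$ and $S$ (the rank bound kills the $\cN^\perp$-part); and the final deduction of irreducibility of $\cG|_{\cN}$ and abelianness of $\cG|_{\cN^\perp}$ is immediate once $\cN$ is known to be invariant. The problem is the core step. From the three involutions you extract only $\dim(V_i\cap gV_i)\ge 1$, and you yourself concede that this leaves ``genuine non-trivial configurations'' which must be eliminated ``by a careful use of mixed commutators and \dots\ a more delicate argument'' that you do not supply. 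That is exactly where the lemma lives: the entire difficulty is ruling out $g\cN\ne\cN$, and naming the tools one might use without carrying out the case analysis is not a proof. It is not even clear that the constraints you list ($\rk[P_{V_i},P_{gV_j}]\le 2$ together with an occasional cyclic element of $\cG_0$) suffice to close all the degenerate configurations, and your stated target in the contradiction case --- that $g$ fixes each coordinate line $\bbC e_k$ --- is both stronger than needed and nowhere derived.

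For comparison, the paper closes this gap with a different and more economical device. Taking $Z\in\cG$ with $Z\cN\ne\cN$, one chooses an eigenbasis $\{e_4,\dots,e_n\}$ of $\widetilde S|_{\cN^\perp}$ and indices $i\le 3<j$ with $\langle Ze_j,e_i\rangle\ne 0$ (without loss of generality $i=1$), sets $\cM=\span\{e_1,e_2\}$, and uses the single involution $T=\diag(-1,-1,1,\dots,1)$ to force both off-diagonal blocks of $Z$ relative to $\cM\oplus\cM^\perp$ to have rank exactly one. Applying the same rank-one conclusion to $Z\widetilde S$ and $Z\widetilde S^2$ --- whose $(1,2)$-blocks acquire $Z_{11}e_1$ and $Z_{11}e_2$ as columns while retaining the nonzero column $\lambda_j Z_{12}e_j$ --- shows that the first two rows of $Z$ are proportional, contradicting invertibility of the unitary $Z$. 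If you wish to salvage your projection-geometry route you must actually enumerate and kill the residual configurations of the lines $L_i$; the paper's trick of multiplying $Z$ by $\widetilde S$ and $\widetilde S^2$ to drag the columns of $Z_{11}$ into the rank-one block is the idea your outline is missing.
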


\begin{proof} 
Since $G$ is non-abelian, in view of the observation made at the beginning of the introduction, $\rk(XY-YX) = 2$, for some $X$, $Y\in\cG$. By Proposition~\ref{p-q-restrictions}, $q$ must be equal to~$2$. Considering matrices of the form $XYX^{-1}Y^{-1}$, as in the proof of Proposition~\ref{p-q-restrictions}, we conclude that $\cG(3,2,A)$ admits a diagonal matrix $B$ with eigenvalues $\{1,-1,-1\}$. Considering $SBS^{-1}$ and $S^2BS^{-2}$, where $S$ is the cyclic permutation as in Definition~\ref{G(p,q,A)}, we conclude that the matrices
$$
\diag(1,-1,-1),\quad\diag(-1,1,-1),\quad\mbox{and}\quad\diag(-1,-1,1)
$$
all belong to $\cG(3,2,A)$. 

Pick an arbitrary $Z\in\cG$ and assume that $\cN$ is not $Z$-invariant. Fix a matrix $\widetilde S\in\cG$ such that $\widetilde S|_\cN=S$. Choose a basis $\{e_4,e_5,\cdots,e_n\}$ for $\cN^\perp$ consisting of eigenvectors of $\widetilde S$. Since $\cN$ (and, hence, $\cN^\perp$) is not $Z$-invariant, there exist $i\leq 3$ and $j\geq 4$ such that $\langle Ze_j,e_i\rangle\neq 0$. Due to the cyclic nature of the conditions of the theorem with respect to the ordered triple $(e_1,e_2,e_3)$, we may and shall assume without loss of generality that $i=1$.  Let $\mathcal M$ be the $2$-dimensional subspace of $\bbC^n$ spanned by $\{e_1,e_2\}$ and write 
$$
Z=\left[\begin{array}{cc}Z_{11}& Z_{12}\\ Z_{21} &Z_{22}\end{array}\right]~{\rm with~respect~ to}~\bbC^n=\mathcal M\oplus\mathcal M^\perp.
$$
The matrix $\diag(-1,-1,1)\in\cG(3,2,A)$ can be obtained as $CSC^{-1}S^{-1}$ where $C=\diag(1,-1,-1)\in\cG(3,2,A)$. This shows that if $\widetilde C\in\cG$ is such that $\widetilde C|_{\cN}=C$, then the matrix
$$
T=\widetilde C\widetilde S\widetilde C^{-1}\widetilde S^{-1}=\diag(-1,-1,1,1,\dots,1)\in\cG.
$$
With respect to $\bbC^n=\cM\oplus\cM^\perp$, this matrix has the form
$$
T=
\left[\begin{array}{cc}
-I & 0\\ 
0 & I
\end{array}\right].
$$
Then 
$$
TZ-ZT=
\left[\begin{array}{cc}
0 & 2Z_{12}\\ 
-2Z_{21} & 0
\end{array}\right].
$$
Since $\rk(TZ-ZT)\le 2$, we conclude that $\rk(Z_{12})=\rk(Z_{21})=1$, for none of $Z_{12}$ and $Z_{21}$ are zero. It follows that $Z_{12}e_k$ $(k=3,4,\cdots,n)$ are multiples of $Z_{12}e_j$. Replacing $Z$ by $Z\widetilde S$ changes the first column $Z_{12}e_3$ of $Z_{12}$ to $Z_{11}e_1$ and its $(j-2)^{\rm nd}$ column $Z_{12}e_j$ to $\lambda_jZ_{12}e_j$, where $\lambda_j$ is the eigenvalue of $\widetilde S$ corresponding to~$e_j$. Thus, again, $Z_{11}e_2$ is a multiple of $Z_{12}e_j$. Another replacement of $Z$ by $Z\widetilde S^2$ reveals that the first two rows of $Z$ are linearly dependent; a contradiction. This shows that $\cN$ is $\cG$-invariant.

Finally, the irreducibility of $\mathcal G|_{\mathcal N}$ follows from the fact that $\cG(p,q,A)$ is irreducible (see, e.g., \cite[Lemma~4.2.8]{RR00}), and the commutativity of $\mathcal G|_{\mathcal N^\perp}$ was established in Proposition~\ref{t.HZ}.
\end{proof}

\begin{lemma}\label{shifted-inv-subspaces}
Let $\cS$ be a semigroup of $n\times n$ matrices and $\cN$ be a subspace of $\bbC^n$ such that, with respect to the decomposition $\bbC^n=\cN\oplus\cN^\perp$, the representation of every member $Z\in\cS$
$$
Z=\begin{bmatrix}
Z_{11} & Z_{12}\\
Z_{21} & Z_{22}
\end{bmatrix}
$$
has the property that $\rk(Z_{21})\le 1$. Then each $Z\in\cS$ admits an invariant subspace $\cN_Z$ such that either $\cN_Z\subseteq\cN$, in which case $\dim(\cN/\cN_Z)\le 1$, or $\cN\subseteq\cN_Z$, in which case $\dim(\cN_Z/\cN)\le 1$ and $\cN_Z=\span\{\cN,Z\cN\}$.
\end{lemma}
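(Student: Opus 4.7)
My plan is to analyze each $Z \in \cS$ individually through the rank-at-most-one structure of its $(2,1)$-block. If $Z_{21} = 0$, then $\cN$ itself is $Z$-invariant and I take $\cN_Z = \cN$, which fits both alternatives trivially. Otherwise $\rk Z_{21} = 1$, and I write $Z_{21} = u v^*$ with nonzero $u \in \cN^\perp$ and $v \in \cN$.

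The decisive step is to bring in the semigroup hypothesis through $Z^2 \in \cS$, which forces $\rk (Z^2)_{21} \le 1$. A direct block multiplication gives
$$
(Z^2)_{21} = Z_{21} Z_{11} + Z_{22} Z_{21} = u (Z_{11}^* v)^* + (Z_{22} u)\, v^*,
$$
a sum of two rank-at-most-one operators. A short linear-algebra check shows that whenever both summands are nonzero, such a sum has rank at most one if and only if the range vectors $u, Z_{22} u$ are linearly dependent, or the coefficient vectors $v, Z_{11}^* v$ are linearly dependent; the degenerate subcases (one summand zero) also fit into one of these alternatives. This yields a dichotomy: either $Z_{22} u \in \span\{u\}$ (the \emph{up} alternative) or $Z_{11}^* v \in \span\{v\}$ (the \emph{down} alternative).

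In the up alternative I propose $\cN_Z := \cN + \span\{u\}$. Invariance is direct: $Z\cN \subseteq \cN + \range(Z_{21}) = \cN_Z$, and $Zu = Z_{12} u + Z_{22} u \in \cN + \span\{u\} = \cN_Z$. To confirm $\cN_Z = \span\{\cN, Z\cN\}$, I pick $x_0 \in \cN$ with $\langle x_0, v\rangle = 1$ (possible since $v \ne 0$) and note that $u = Z x_0 - Z_{11} x_0 \in Z\cN + \cN$. In the down alternative I propose $\cN_Z := \ker v^* \cap \cN$, a codimension-one subspace of $\cN$. For $x \in \cN_Z$ one has $Z_{21} x = \langle x, v\rangle u = 0$, so $Zx = Z_{11} x \in \cN$; the eigenvector relation $Z_{11}^* v = \lambda v$ then gives $\langle Z_{11} x, v\rangle = \bar\lambda \langle x, v\rangle = 0$, proving $Zx \in \cN_Z$.

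The main obstacle is the dichotomy derived from $\rk (Z^2)_{21} \le 1$: one must ensure that the degenerate cases (where $Z_{22} u = 0$ or $Z_{11}^* v = 0$) are absorbed correctly into the up/down bookkeeping. Once the dichotomy is in hand, the two constructions of $\cN_Z$ and the identity $\cN_Z = \span\{\cN, Z\cN\}$ in the up case follow immediately from the block formulas.
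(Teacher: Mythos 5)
Your proof is correct and follows essentially the same route as the paper: both pass to $Z^2\in\cS$, use $\rk(Z_{21}Z_{11}+Z_{22}Z_{21})\le 1$ to force either $Z_{22}u\in\span\{u\}$ or $Z_{11}^*v\in\span\{v\}$, and then take $\cN+\span\{u\}$ or $\ker v^*\cap\cN$ accordingly. The only difference is notational (outer products versus a basis in which $Z_{21}$ has a single non-zero entry), and your write-up handles the degenerate subcases cleanly.
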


\begin{proof}
Denote the dimension of $\cN$ by~$k$. Clearly, there is no loss of generality in assuming that $2\le k\le n-2$.

Let $Z\in\cS$ be such that $\cN$ is not $Z$-invariant. Since $\rk(Z_{21})=1$, by choosing appropriate bases $\{e_1,\dots,e_k\}$ for $\cN$ and $\{e_{k+1},\dots,e_n\}$ for $\cN^\perp$, we may assume that only the $(1,1)$-entry of $Z_{21}$ is non-zero. 

Consider the matrix $Z^2\in\cS$. Its $(2,1)$-block is equal to $Z_{21}Z_{11}+Z_{22}Z_{21}$. Notice that only the first row of the matrix $Z_{21}Z_{11}$ may contain non-zero entries and only the first column of the matrix $Z_{22}Z_{21}$ may contain non-zero entries. Since the rank of $Z_{21}Z_{11}+Z_{22}Z_{21}$ is assumed to be at most one, we conclude that one of the matrices $Z_{21}Z_{11}$ or $Z_{22}Z_{21}$ must satisfy the property that all its entries except, perhaps, the $(1,1)$-entry, are equal to zero. If all but the $(1,1)$-entry of $Z_{21}Z_{11}$ are zero, then $Z_{11}$ (and, hence, $Z$) leaves invariant the space $\span\{e_2,\dots,e_k\}$. If all but the $(1,1)$-entry of $Z_{22}Z_{21}$ are zero, then in the first column of $Z_{22}$ only the first entry may be non-zero, so that $Z$ leaves invariant the space $\span\{e_1,\dots,e_k,e_{k+1}\}$.
\end{proof}

Now we are ready to prove the main theorem of the paper.

\begin{theorem}\label{main-thm}
Let $\cG$ be a group of unitary $n\times n$ matrices. If $\rk(AB-BA)\le 2$ for all $A,B\in\cG$, then there is a subspace $\cM$ of $\bbC^n$ such that $1\le\dim\cM\le 3$ and $\cG\subseteq\cG_1\oplus\cG_2$ with $\cG_2$ abelian, where the direct sum is with respect to the decomposition $\bbC^n=\cM\oplus\cM^\perp$.
\end{theorem}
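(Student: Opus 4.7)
Plan: The abelian case is immediate: commuting unitaries share a common eigenvector, giving a 1-dimensional $\cG$-invariant $\cM$ with $\cG|_{\cM^\perp}$ automatically abelian. So assume throughout that $\cG$ is non-abelian.

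The strategy is to locate $\cM$ as a $\cG$-invariant subspace of dimension 2 or 3, produced via a minimal non-abelian subgroup of $\cG$. Let $\cH$ be such a subgroup. By \cite[Lemma~4.2.9]{RR00}, $\cH$ contains a subgroup $\cH_0$ and an $\cH_0$-invariant subspace $\cN$ of prime dimension $p$ such that, in a suitable basis of $\cN$, $\cH_0|_\cN$ equals $\cG(p,q,A)$ for some prime $q$ and diagonal matrix $A$ of Definition~\ref{G(p,q,A)}. The hypothesis $\rk(XY-YX)\le 2$ descends to this restriction, so Proposition~\ref{p-q-restrictions} forces $p=2$ or $(p,q)=(3,2)$, whence $\dim\cN\in\{2,3\}$. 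If $\dim\cN=3$, Lemma~\ref{t.G0p3q2} directly gives $\cN$ as $\cG$-invariant with $\cG|_{\cN^\perp}$ abelian, and we take $\cM=\cN$.

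The remaining case $\dim\cN=2$ calls for an argument modeled on Lemma~\ref{t.G0p3q2}, aiming to show $\cN$ is $\cG$-invariant and then invoke Proposition~\ref{t.HZ}. Let $\tilde S,\tilde A\in\cH_0$ extend the generators $S,A$ of $\cH_0|_\cN$, and form $T=\tilde S\tilde A\tilde S^{-1}\tilde A^{-1}\in\cG$. Its $\cN$-block equals $\diag(\omega_2/\omega_1,\omega_1/\omega_2)$ when $q>2$, or $-I_\cN$ when $q=2$; either way this block already contributes rank 2 to $T-I$, so the bound $\rk(T-I)\le 2$ forces the $\cN^\perp$-block of $T$ to be $I_{\cN^\perp}$. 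Hence $T=T_0\oplus I_{\cN^\perp}$ with $T_0-I_\cN$ invertible. For any $Z\in\cG$, the block-form
$$
TZ-ZT=\begin{pmatrix}[T_0,Z_{11}] & (T_0-I)Z_{12}\\ Z_{21}(I-T_0) & 0\end{pmatrix},
$$
together with invertibility of $T_0-I_\cN$, yields $\rk(Z_{12})+\rk(Z_{21})\le\rk(TZ-ZT)\le 2$; unitarity of $Z$ then forces $\rk(Z_{21})=\rk(Z_{12})\le 1$. Lemma~\ref{shifted-inv-subspaces} now supplies, for each $Z\in\cG$, an invariant subspace $\cN_Z$ of dimension 1, 2, or 3. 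The last step is to rule out $\cN_Z\ne\cN$: one imitates the row-dependency argument at the end of Lemma~\ref{t.G0p3q2}'s proof, using the swap $\tilde S|_\cN$ to symmetrize in the two basis vectors of $\cN$ and choosing an eigenvector basis for $\tilde S|_{\cN^\perp}$; the rank-one constraints on $Z_{12},Z_{21}$, propagated via $Z\mapsto Z\tilde S$, force the first two rows of $Z$ to be linearly dependent, contradicting invertibility.

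The main obstacle is this last step of the $p=2$ case: Lemma~\ref{t.G0p3q2}'s cyclic argument is tailored to a 3-cycle on a 3-dimensional $\cN$ (with an auxiliary 2-dimensional $\cM\subsetneq\cN$ on which to mount the block decomposition), and adapting it to a swap on a 2-dimensional $\cN$ requires more delicate bookkeeping of the rank-one bounds on $Z_{12},Z_{21}$ together with the eigenstructure of $\tilde S|_{\cN^\perp}$. The subcase $q=2$, where $T_0=-I_\cN$, is structurally closest to the situation of Lemma~\ref{t.G0p3q2}, whereas $q>2$ introduces the complication of a non-scalar $T_0$ with two distinct non-unit eigenvalues.
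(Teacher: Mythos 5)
Your reduction to the groups $\cG(p,q,A)$, the disposal of the case $(p,q)=(3,2)$ via Lemma~\ref{t.G0p3q2}, and the derivation of $\rk(Z_{21})\le 1$ from the commutator $T=T_0\oplus I$ in the $p=2$ case all match the paper's proof. But the step you flag as the ``main obstacle'' is not a bookkeeping difficulty: the statement you are trying to prove there is false. In the $p=2$ case the $2$-dimensional subspace $\cN$ need \emph{not} be $\cG$-invariant. Take $\cG$ to be the group of all $3\times 3$ signed permutation matrices (direct-summed with an identity block if you want $n\ge 4$). Every commutator $ABA^{-1}B^{-1}$ in this group is either a diagonal $\pm 1$ matrix of determinant $1$ or a matrix $E\sigma$ with $\sigma$ a $3$-cycle, $E$ diagonal and $(E\sigma)^3=I$ with zero trace; in both cases $\rk(ABA^{-1}B^{-1}-I)\le 2$, so the hypothesis holds. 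This $\cG$ is irreducible on $\bbC^3$, contains the subgroup $\cG_0$ generated by $\diag(1,-1,1)$ and the transposition of $e_1,e_2$, which leaves $\cN=\span\{e_1,e_2\}$ invariant with $\cG_0|_\cN=\cG(2,2,\diag(1,-1))$ --- and yet $\cN$ is not $\cG$-invariant (the $3$-cycle moves it). So no amount of propagating the rank-one constraints on $Z_{12},Z_{21}$ can force the first two rows of $Z$ to be dependent; for $Z$ the $3$-cycle they plainly are not.

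What the paper actually proves in the $p=2$ case is that either $\cN$ is $\cG$-invariant or $\cM=\span\{\cN,\cG\cN\}$ is a $3$-dimensional $\cG$-invariant subspace, and this is where the real work lies: one first shows (using irreducibility of $\cG(2,q,A)$ to correct an eigenvector) that if $\cN$ is not invariant then $\cG$ contains a unitary $T$ with \emph{no} eigenvector in $\cN$, so that by Lemma~\ref{shifted-inv-subspaces} $T$ reduces the $3$-dimensional space $\span\{\cN,T\cN\}=\span\{e_1,e_2,e_3\}$; the explicit block form of $T$ (with $r\ne 0$, $t\ne 0$) is then played against the rank-one conditions on an arbitrary $S\in\cG$ and on $TS$ to force $g=h=0$, i.e.\ $S\cN\subseteq\span\{e_1,e_2,e_3\}$. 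That argument has no analogue in your outline. A secondary gap: you take a minimal non-abelian subgroup of $\cG$ for granted, but an infinite non-abelian group need not contain one, and \cite[Lemma~4.2.9]{RR00} concerns \emph{finite} minimal non-abelian groups; the paper first replaces $\cG$ by $\overline{\bbT\cG}$, a compact Lie group, and invokes \cite{BGM04} to extract a finite non-abelian subgroup (the $\bbT$-saturation is also what lets one normalize $\cG_0|_\cN$ to exactly $\cG(p,q,A)$).
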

\begin{proof}
Clearly, there is no loss of generality in assuming that $\cG$ is not abelian and $n\ge 4$. Moreover, we may also assume that $\cG=\overline{\bbT\cG}$ where $\bbT$ is the unit circle on the complex plane.

Since $\cG=\overline{\cG}$, it is a compact Lie group, so \cite[Theorem~5]{BGM04} implies that $\cG$ contains a finite non-abelian subgroup. It follows that $\cG$ contains a minimal non-abelian subgroup. By \cite[Lemma 4.2.9]{RR00}, every minimal non-abelian finite group admits an invariant subspace $\cN$ such that the restriction of the group to $\cN$ is, after a similarity, generated by two matrices $\alpha A$ and $\beta S$ where $A$ is a non-scalar diagonal matrix, $S$ is the cyclic permutation, and $\alpha,\beta\in\bbT$. Since $\cG=\bbT\cG$, we conclude that $\cG$ contains a subgroup $\cG_0$ whose restriction to $\cN$ is equal (in an appropriate basis) to the group $\cG(p,q,A)$.

It follows from Proposition~\ref{p-q-restrictions} and Lemma~\ref{t.G0p3q2} that, without loss of generality, $p=2$. Since $\cG(2,q,A)$ is not abelian, it contains a matrix $C$ of the form $XYX^{-1}Y^{-1}$ different from the identity. By the properties of $\cG(p,q,A)$, this matrix is necessarily diagonal, and its diagonal entries are $q$-roots of the unity. Since $\det(C)=1$, we have $C=\diag(\omega,\bar\omega)$, for some $\omega\ne 1$, $\omega^q=1$.

If $Z\in\cG$ is an arbitrary matrix, then, considering the rank of $ZC-CZ$, we conclude that, with respect to the decomposition $\bbC^n=\cN\oplus\cN^\perp$, $Z$ is represented as  
$$
Z=\begin{bmatrix}
Z_{11} & Z_{12}\\
Z_{21} & Z_{22}
\end{bmatrix},
$$
where $\rk(Z_{21})\le 1$. By Lemma~\ref{shifted-inv-subspaces}, either $Z$ admits an eigenvector in~$\cN$, or the space $\span\{\cN,Z\cN\}$ has dimension~3 and is $Z$-invariant. Notice that this space contains $\cN$ as a subspace of codimension one.

First, we claim that, assuming $\cN$ is not $\cG$-invariant, $\cG$ admits a matrix without an eigenvector in~$\cN$.

Indeed, let $V\in\cG$ be such that $\cN$ is not $V$-invariant. If $V$ does not have eigenvectors in~$\cN$, we are done. Suppose that $V$ has an eigenvector in $\cN$. Write $V$ as
$$
V=\begin{bmatrix}
V_{11} & V_{12}\\
V_{21} & V_{22}
\end{bmatrix}.
$$
Let $f\in\cN$ be an eigenvector of~$V$. Clearly, $\span\{f\}=\ker(V_{21})$ and $f$ is an eigenvector for~$V_{11}$. Since $G(2,q,A)$ is irreducible (see, e.g., \cite[Lemma 4.2.8]{RR00}), there exists $U\in G(2,q,A)$ such that $f$ is not an eigenvector of $UV_{11}$. There exists a matrix $Z\in\cG$ of form $U\oplus D$, where $D$ is a unitary $(n-2)\times(n-2)$ matrix. Since $\ker(DV_{21})=\ker(V_{21})=\span\{f\}$, the matrix $ZV$ does not admit eigenvectors in~$\cN$.

Let $T\in\cG$ be a matrix without eigenvectors in~$\cN$. Since $T$ is a unitary matrix, every invariant subspace of it is reducing. By Lemma~\ref{shifted-inv-subspaces}, there exists an orthonormal basis $\{e_1,e_2\}$ of~$\cN$ and a unit vector $e_3$ in~$\cN^\perp$ such that, relative to the decomposition $\bbC^n=\span\{e_1\}\oplus\span\{e_2\}\oplus\span\{e_3\}\oplus(\cN^\perp\ominus\span\{e_3\})$, $T$ is written in the form
$$
T=\begin{bmatrix}
p & q & w & 0\\
r & s & u & 0\\
0 & t & v & 0\\
0 & 0 & 0 & U
\end{bmatrix},
$$
where $r\ne 0$, $t\ne 0$, and $U$ is an $(n-3)\times(n-3)$ unitary matrix.

Let $S\in\cG$ be arbitrary. Write, relative to the same decomposition,
$$
S=\begin{bmatrix}
a & b & * & *\\
c & d & * & *\\
e & f & * & *\\
g & h & * & *
\end{bmatrix},
$$
where $a,b,c,d,e,f$ are complex numbers, $g$ and $h$ are $(n-3)$-vectors, and the symbol $*$ stands for a number or a matrix whose value does not concern us. Multiplying $T$ by $S$, we get:
$$
TS=\begin{bmatrix}
* & * & * & *\\
ar+cs+eu & br+ds+fu & * & *\\
ct+ev & dt+fv & * & *\\
Ug & Uh & * & *
\end{bmatrix}.
$$
Recall that 
$$
\rk\Big(\!\!\begin{bmatrix}
e & f\\
g & h
\end{bmatrix}\!\!\Big)\le 1
\quad
\mbox{and}
\quad
\rk\Big(\!\!\begin{bmatrix}
ct+ev & dt+fv\\
Ug & Uh
\end{bmatrix}\!\!\Big)\le 1.
$$
Suppose that one of the vectors $g$ and $h$ is not zero, say, $g\ne 0$. Then there exists $\alpha\in\bbC$ such that $h=\alpha g$, $f=\alpha e$ and $dt+fv=\alpha(ct+ev)$. Since $t\ne 0$, we conclude that $d=\alpha c$. It follows that 
$$
\rk\Big(\!\!\begin{bmatrix}
c & d \\
e & f \\
g & h 
\end{bmatrix}\!\!\Big)=1.
$$
Repeating the same argument with the matrix $TS$ replacing the matrix $S$, we obtain
$$
\rk\Big(\!\!\begin{bmatrix}
ar+cs+eu & br+ds+fu\\
ct+ev & dt+fv\\
Ug & Uh
\end{bmatrix}\!\!\Big)=1.
$$
It follows that $br+ds+fu=\alpha(ar+cs+eu)$. Since $r\ne 0$, the only possibility is that $b=\alpha a$. However, this implies that 
$$
\rk\Big(\!\!\begin{bmatrix}
a & b\\
c & d \\
e & f \\
g & h 
\end{bmatrix}\!\!\Big)=1.
$$
This is impossible since the matrix $S$ is unitary, hence invertible.

The case $h\ne 0$ brings us to the same conclusion. Therefore $g=h=0$. Since $S$ was chosen arbitrarily, this implies that the space $\cM=\cG\cN$ is $\cG$-invariant and $\cM\perp(\cN^\perp\ominus\span\{e_3\})$. Under the assumption that $\cN$ is not $\cG$-invariant, this means that $\cM=\span\{e_1,e_2,e_3\}$, a 3-dimensional $\cG$-invariant subspace. The rest of the conclusions of the theorem follow from Proposition~\ref{t.HZ}.
\end{proof}

\begin{corollary}\label{semigroup-version}
Let $X$ be a Banach space and $\cS=\overline{\bbR^+\cS}$ be a semigroup of operators on $X$ containing a non-zero compact operator such that the minimal rank of nonzero operators in $\cS$ is at least~$4$. If $\rk(AB-BA)\le 2$ for all $A,B\in\cS$, then $\cS$ is reducible.
\end{corollary}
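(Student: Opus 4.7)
The plan is to assume for contradiction that $\cS$ is irreducible and to produce an irreducible unitary group on $\bbC^r$ with $r\ge 4$ satisfying the commutator rank bound $\rk[X,Y]\le 2$, which directly contradicts Theorem~\ref{main-thm} (that theorem forces a $\cG$-invariant subspace of dimension at most $3$, hence proper when $r\ge 4$).

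First I would extract a minimum-rank idempotent. Using standard theory of irreducible semigroups of operators on a Banach space containing a non-zero compact operator (as developed in \cite[Chapter 7]{RR00}, in particular the Turovskii-type results on irreducible semigroups of compact operators), $\cS$ contains non-zero finite-rank operators, and the minimal rank $r$ of non-zero operators of $\cS$ is attained. By hypothesis $r\ge 4$. Using the cone-closure hypothesis $\cS=\overline{\bbR^+\cS}$, a spectral/Ces\`aro averaging argument applied to a fixed minimum-rank $F\in\cS$ (with non-nilpotent $F|_{\range F}$, after replacing $F$ by an appropriate element of $F\cS F$ if needed) produces a non-zero idempotent $E\in\cS$ with $\rk E=r$.

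Second, I compress the semigroup to $\range E$. For each $A\in\cS$ the operator $EAE$ lies in $\cS$ and has rank at most $r$, so by minimality its rank is either $0$ or exactly $r$; in the latter case $EAE|_{\range E}$ is an invertible operator on $\range E\cong\bbC^r$. Let $\cG_0$ be the set of these non-zero compressions. A short computation (using the identity $\phi(A)\phi(B)=\phi(AEB)$ with $\phi(A)=EAE|_{\range E}$) shows that $\cG_0$ is closed under products, and the closure hypothesis together with finite-dimensionality promotes $\cG_0$ to a group of invertibles acting irreducibly on $\bbC^r$; the commutator-rank condition $\rk[X,Y]\le 2$ descends from $\cS$ to $\cG_0$ verbatim.

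The main obstacle is the passage from $\cG_0$ to a unitary group $\cG$ on $\bbC^r$, because $\cG_0$ contains the unbounded ray $\bbR^+ I|_{\range E}$ and is not a priori similar to a subgroup of $U(r)$. The idea is to quotient by scalars: by Burnside's theorem $\cG_0$ spans $M_r(\bbC)$, and the rank-$2$ commutator condition is a Zariski-closed polynomial constraint that is inherited by the Zariski closure of $\cG_0/(\bbR^+ I)$ in $PGL(r,\bbC)$. Using the presence of the compact operator in $\cS$ together with $\cS=\overline{\bbR^+\cS}$ to control norms of minimum-rank elements, one shows that $\cG_0/(\bbR^+ I)$ is relatively compact in $PGL(r,\bbC)$; hence it admits an invariant inner product, and conjugating by the corresponding positive operator brings $\cG_0$ into $\bbR^+\cdot U(r)$. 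The unitary part $\cG\subseteq U(r)$ is then a closed, irreducible subgroup of $U(r)$ still satisfying $\rk[X,Y]\le 2$.

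Finally, Theorem~\ref{main-thm} applied to $\cG$ produces a $\cG$-invariant subspace $\cM\subseteq\bbC^r$ with $1\le\dim\cM\le 3$. Since $r\ge 4$, $\cM$ is a proper non-zero subspace, contradicting the irreducibility of $\cG$ and hence of $\cS$. As indicated above, the unitarization step is the delicate one and is where the full force of the three hypotheses (closure under $\bbR^+$, existence of a compact element, and minimum rank at least $4$) must be combined; the remaining steps are then straightforward consequences of the already established Theorem~\ref{main-thm} and Proposition~\ref{t.HZ}.
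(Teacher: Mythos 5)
Your proposal follows the same skeleton as the paper's proof: assume irreducibility, extract a minimal-rank idempotent $E$, compress to $E\cS E|_{\range E}$ to get an irreducible semigroup of $r\times r$ matrices each of which is zero or invertible, pass to a unitary group modulo positive scalars, and contradict Theorem~\ref{main-thm} since $r\ge 4$. The paper's version is a little cleaner at the start (it first replaces $\cS$ by its ideal of compact operators, which is again irreducible, and then invokes \cite[Lemma~8.1.15]{RR00} for the finite minimal rank and the idempotent), but your steps one and two are essentially identical in content.

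The one place where your justification goes astray is the unitarization step, which you correctly identify as the delicate one. You propose to get relative compactness of $\cG_0$ modulo scalars from ``the presence of the compact operator in $\cS$ together with $\cS=\overline{\bbR^+\cS}$,'' by ``controlling norms of minimum-rank elements.'' That is not the right mechanism: compactness of the operators in $\cS$ says nothing about boundedness of the normalized compressions, and a Zariski-closure argument cannot produce compactness either. The mechanism that actually works, and the one the paper uses by citing \cite[Lemma~3.1.6]{RR00}, is the \emph{irreducibility} of the compressed semigroup: for an irreducible semigroup of matrices in which every element is zero or invertible, the set $\{T/\rho(T)\}$ (normalization by spectral radius) is bounded, its closure is a compact group, and hence it is simultaneously similar to a group of unitaries; since $AB-BA$ only gets multiplied by a positive scalar under this normalization, the condition $\rk(AB-BA)\le 2$ passes to the unitary group verbatim. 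With that substitution your argument closes up and coincides with the paper's; as written, the compactness claim is unsupported.
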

\begin{proof}
Suppose that $\cS$ is irreducible. It is well-known that a non-trivial ideal of an irreducible semigroup is irreducible. Thus, there is no loss of generality in assuming that $\cS$ consists of compact operators.

Denote the minimal non-zero rank of operators in $\cS$ by~$r$. By \cite[Lemma 8.1.15]{RR00}, $r$ is finite and there exists an idempotent $E\in\cS$ of rank~$r$. Let $\cS_0=E\cS E|_{\mathrm{Range}\,E}$. Then $\cS_0$ is represented as a semigroup of $r\times r$ matrices. Moreover, every member of this semigroup is either invertible or zero, by the minimality of the rank $r$ in $\cS$. Also, as a compression of an irreducible semigroup, the semigroup $\cS_0$ must be irreducible. By \cite[Lemma 3.1.6]{RR00}, $\cS_0\setminus\{0\}$ is a group of matrices. Moreover, there exists a group $\cG$ of unitary matrices such that, after a similarity, $\cS_0\setminus\{0\}\subseteq \bbR^+\cG$. Clearly, $\cG$ must be irreducible, too. Also, the proof of \cite[Lemma 3.1.6]{RR00} shows that the group $\cG$ is, in fact, similar to the group $\{\frac{1}{r(T)}\,T\mid T\in\cG_0\}$. Hence, the condition $\rk(AB-BA)\le 2$ holds for all $A,B\in\cG$. This, obviously, contradicts the conclusion of Theorem~\ref{main-thm}.
\end{proof}

We remark that the condition about the rank in Corollary~\ref{semigroup-version} cannot be improved. This is clear if the minimal rank is allowed to be equal to~$2$ (take, for example, the group of $2\times 2$ unitaries). The following proposition exhibits an example of an irreducible group of $3\times 3$ unitary matrices with the property $\rk(AB-BA)\le 2$ for all $A,B$ in the group.

\begin{proposition}\label{3x3-irreducible}
Let 
$$
T=\begin{bmatrix}
-1 & 0 & 0 \\
0 & 1 & 0 \\
0 & 0 & -1 \\
\end{bmatrix}
\quad
\mbox{and}
\quad
S=\begin{bmatrix}
0 & 0 & 1 \\
1 & 0 & 0 \\
0 & 1 & 0 \\
\end{bmatrix}.
$$
Then the group $\cG=\langle T,S\rangle$ is irreducible and satisfies the condition that $\rk(AB-BA)\le 2$ for all $A,B\in\cG$.
\end{proposition}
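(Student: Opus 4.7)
The plan is to first pin down the structure of $\cG$ explicitly and then use it to verify both claims. Conjugation by $S$ cyclically permutes the diagonal entries of a diagonal matrix, so a direct computation of $STS^{-1}$ and $S^{2}TS^{-2}$ shows that $\cG$ contains the Klein four-group
$\cD=\{I,\diag(-1,1,-1),\diag(-1,-1,1),\diag(1,-1,-1)\}$. Since $\cD$ is normalized by $S$, the set $\{DS^{k}\mid D\in\cD,\; k=0,1,2\}$ is closed under multiplication and inversion, so $\cG$ coincides with this set and has order $12$.

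For irreducibility, I would exploit the fact that $\cG$ is unitary, so any nontrivial invariant subspace would force the existence of a common eigenvector of $S$ and $T$. The three eigenlines of $S$ are spanned by $(1,1,1)$, $(1,\omega,\omega^{2})$, and $(1,\omega^{2},\omega)$ with $\omega=e^{2\pi i/3}$, and a direct check shows that $T$ fixes none of them as a line. Hence $\cG$ admits no common eigenvector, which rules out both $1$-dimensional and (by taking orthogonal complements of unitary invariant subspaces) $2$-dimensional invariant subspaces.

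For the rank bound, the key structural observation is that for arbitrary $A=D_{1}S^{k_{1}}$ and $B=D_{2}S^{k_{2}}$ in $\cG$, the group commutator $ABA^{-1}B^{-1}$ has $S$-exponent $k_{1}+k_{2}-k_{1}-k_{2}\equiv 0\pmod{3}$, and so belongs to the abelian subgroup $\cD$. Using the identity $AB-BA=(ABA^{-1}B^{-1}-I)BA$ together with the invertibility of $BA$, I get $\rk(AB-BA)=\rk(ABA^{-1}B^{-1}-I)$. Each nonidentity element of $\cD$ has eigenvalues $-1,-1,1$, so the right-hand side is either $0$ or $2$; this yields the bound.

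I do not anticipate a serious obstacle. Once the normal Klein-four subgroup $\cD$ has been identified and $\cG$ has been written as $\cD\cdot\langle S\rangle$, both conclusions reduce to finite bookkeeping. The only conceptually important point worth isolating is that the commutator always lands in $\cD$, which together with the determinant-one constraint leaves the pattern $(-1,-1,1)$ as the only possibility and therefore forces the rank to be at most $2$.
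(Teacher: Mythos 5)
Your proof is correct, and it takes a somewhat different route from the paper's. For the rank bound, the paper works directly with matrix entries: it observes that every element of $\cG$ has one of three ``patterns'' (a signed permutation supported on the diagonal or on one of the two off-diagonal cyclic positions) with an even number of entries equal to $-1$, notes that $AB$ and $BA$ always share a pattern, and concludes by a counting argument that $AB$ and $BA$ must agree in at least one nonzero entry, so $AB-BA$ has at most two nonzero entries. You instead identify the normal Klein four-subgroup $\cD$ of diagonal elements, observe that every group commutator $ABA^{-1}B^{-1}$ lands in $\cD$ (zero $S$-exponent), and use the identity $AB-BA=(ABA^{-1}B^{-1}-I)BA$ together with the spectrum $\{-1,-1,1\}$ of the nonidentity elements of $\cD$. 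The two arguments encode the same structural fact ($\cG=\cD\cdot\langle S\rangle$ with $\cD$ the even-sign diagonal matrices), but yours is cleaner and is actually the technique the paper itself uses in the introduction and in Proposition~\ref{t.HZ}; the paper's entry-counting is more elementary but more ad hoc. For irreducibility the paper simply cites \cite[Lemma 4.2.8]{RR00}, whereas you give a self-contained argument: since $\cG$ is unitary, a nontrivial invariant subspace (or its orthogonal complement) would be a common eigenline, $S$ has simple spectrum so such a line must be one of its three eigenlines, and none of these is fixed by $T$. Both of your arguments are complete; the only point worth making explicit in a write-up is that the diagonal elements of $\cG$ are exactly the four elements of $\cD$ (so that a diagonal commutator cannot have an odd number of $-1$'s), which follows immediately from your decomposition $\cG=\{DS^k\mid D\in\cD,\ k=0,1,2\}$.
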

\begin{proof}
By \cite[Lemma 4.2.8]{RR00}, the group $\cG$ is irreducible. Let us show that $\rk(AB-BA)\le 2$ for all $A,B\in\cG$.

Observe that every member of $\cG$, being a finite product of matrices $T$, $S$, $T^{-1}$ and $S^{-1}$, can be written in one of the following three forms:
$$
\begin{bmatrix}
\alpha & 0 & 0 \\
0 & \beta & 0 \\
0 & 0 & \gamma \\
\end{bmatrix},
\quad
\begin{bmatrix}
0 & \alpha & 0 \\
0 & 0 & \beta \\
\gamma & 0 & 0 \\
\end{bmatrix},
\quad
\mbox{or}
\quad
\begin{bmatrix}
0 & 0 & \alpha \\
\beta & 0 & 0 \\
0 & \gamma & 0 \\
\end{bmatrix},
$$
with $\alpha,\beta,\gamma\in\{1,-1\}$. Moreover, among the numbers $\alpha,\beta,\gamma$ exactly two or none are equal to $-1$, the rest being equal to~$1$. For $A\in\cG$, let us refer to the particular form of $A$ among the three forms above as the \emph{pattern} of~$A$.

A routine check shows that for all matrices $A$ and $B\in\cG$, the patterns of $AB$ and $BA$ are the same. Hence, the difference $AB-BA$ must have the same pattern, too. Now, since there are exactly zero or two elements equal to $-1$ among the non-zero elements of $AB$ and $BA$, a quick check shows that there is at least one entry $(i,j)$ such that $(AB)_{ij}$ and $(BA)_{ij}$ are both equal to $1$ or to $-1$ simultaneously. But this means that the difference $AB-BA$ has at most two non-zero entries, so that $\rk(AB-BA)\le 2$.
\end{proof}

%%%%%%%%%%%%%%%%%%%%%%%%%%%%%%%%%%%%%%%%%%%%%%%%%
%Section on G(p,q,A)
%%%%%%%%%%%%%%%%%%%%%%%%%%%%%%%%%%%%%%%%%%%%%%%
\section{On the structure of the group $\mathcal G(p,q,A)$}\label{s.GpqA} 

For prime numbers $p$ and $q$, let $\mathcal G=\mathcal G(p,q,A)$ be the irreducible group with generators $A$ and $S$ as defined before. These groups played a central role in our arguments from Section~\ref{main-section}. In the present section, we will further study the structure of these groups in terms of the following parameters:
\begin{eqnarray}
\rho&=&\min\{{\rm rank}(D-I)\neq 0: D\in\mathcal G; D~{\rm diagonal}\}\\
r&=&\max\{{\rm rank}(XYX^{-1}Y^{-1}-I):X,Y\in \mathcal G\}
\end{eqnarray}
\noindent {\bf Note.} Clearly, $1\leq\rho\leq r\leq p$.

Throughout the remainder of the paper, $\mathcal G=\mathcal G(p,q,A)$ for some $p,q, A$.  If $p,q$ are fixed, we may also write $\mathcal G_A$, $\rho_A$ and $r_A$ to denote $\mathcal G(p,q,A)$, $\rho$ and $r$, respectively.

\begin{theorem}\label{t.DsubsetC} Let $\mathcal D_A$ be the collection of all diagonal matrices in $\mathcal G_A=\mathcal G(p,q,A)$ and let  $\mathfrak{S}$ be the subgroup generated by $S$. Also, let $\mathcal C_A$ be the commutator subgroup of $\mathcal G_A$. Then
$\mathcal G_A =\mathcal D_A \mathfrak{S} =\mathfrak{S} \mathcal D_A$ and
 $\mathcal C_A\subset \mathcal D_A$. Moreover,  if $~\mathcal C_A\neq \mathcal D_A$, then one of the following cases holds.
\begin{enumerate}
\item $\mathcal C_A$ contains no nonscalar matrix. Then $p/2\leq \rho_A\leq r_A=p=q$ and $ \mathcal C_A=\{\eta I:~\eta^p=1\}$.
\item $\mathcal C_A$ contains nonscalar matrices and for any nonscalar  $B\in \mathcal C_A$,  $\mathcal C_B=\mathcal D_B$, $2\leq\rho_B\leq r_B\leq r_A$ and $\rho_A\leq \rho_B \leq 2\rho_A$.
\end{enumerate} 
\end{theorem}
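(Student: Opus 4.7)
The plan is to exploit the semidirect-product structure of $\cG_A$ and to reduce each claim to a module-theoretic statement about the cyclic action $\sigma(D):=SDS^{-1}$ (cyclic shift of diagonal entries) on the abelian group of diagonals. Since $\sigma$ stabilises $\cD_A$, every word in $A$ and $S$ collapses via $SA=\sigma(A)S$ to the normal form $DS^k$, giving $\cG_A=\cD_A\mathfrak{S}=\mathfrak{S}\cD_A$ and realising $\cG_A$ as the semidirect product $\cD_A\rtimes_\sigma\mathfrak{S}$. A direct calculation in this semidirect product yields
$$
[D_1S^{k_1},\,D_2S^{k_2}]=D_1\sigma^{k_1}(D_2)\sigma^{k_2}(D_1^{-1})D_2^{-1},
$$
which is diagonal (the $S$-exponents cancel), so $\cC_A\subseteq\cD_A$. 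Setting $D_1=I,\,k_1=1$ further identifies $\cC_A$ as $\{\sigma(D)D^{-1}:D\in\cD_A\}$, i.e.\ as the image of the ``coboundary'' homomorphism $\sigma-1$ (written multiplicatively) on $\cD_A$.

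For case~(i), assume $\cC_A$ contains no non-scalar matrix. The above, applied to $D=A$, gives
$$
\sigma(A)A^{-1}=\diag(\omega_p/\omega_1,\,\omega_1/\omega_2,\,\ldots,\,\omega_{p-1}/\omega_p)=\eta I
$$
for some scalar $\eta$, so $\omega_{i-1}=\eta\,\omega_i$ for every $i$; cycling the recursion around forces $\eta^p=1$. Each ratio is also a $q$-th root of unity, so $\eta^q=1$; and $A$ non-scalar gives $\eta\neq 1$, so primality of $p,q$ forces $p=q$ and $\eta$ to be a primitive $p$-th root. Consequently $\sigma(A)=\eta A$, hence $\sigma^k(A)=\eta^kA$, which yields $\cD_A=\langle A,\eta I\rangle$; from this one reads off $\cC_A=\langle\eta I\rangle=\{\eta^kI:0\le k<p\}$ and $r_A=p$. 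Finally $\rho_A\ge p/2$ because any non-scalar $D\in\cD_A$ has the form $\eta^aA^b$ with $b\not\equiv 0\pmod p$, whose $p$ diagonal entries are distinct $p$-th roots of unity (so exactly one equals $1$), giving $\rk(D-I)=p-1\ge p/2$.

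For case~(ii), fix a non-scalar $B\in\cC_A$ and write $B=\sigma(E)E^{-1}$ for some $E\in\cD_A$. Since $B$ is a commutator, $\det B=1$, so coordinatewise $B\sigma(B)\cdots\sigma^{p-1}(B)=I$; identifying $\cD_B$ additively with a $\sigma$-submodule of $(\bbZ/q)^p$, this places $\cD_B$ inside the augmentation ideal of the group ring $(\bbZ/q)[\mathfrak{S}]$. The coboundary description applied inside $\cG_B$ gives $\cC_B=(\sigma-1)\cD_B$, so $\cC_B=\cD_B$ reduces to the triviality of the kernel of $\sigma-1$ on $\cD_B$, i.e.\ to the absence of non-zero scalars in $\cD_B$; when $p\neq q$, Maschke's theorem supplies this, since the augmentation ideal of $(\bbZ/q)[\mathfrak{S}]$ has no trivial constituent. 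For the rank bounds: $r_B\le r_A$ and $\rho_A\le\rho_B$ are immediate from $\cG_B\subseteq\cG_A$; $\rho_B\ge 2$ holds because every $D\in\cD_B$ has $\det D=1$, so a non-identity $D$ must have at least two diagonal entries not equal to $1$; and $\rho_B\le 2\rho_A$ follows by choosing $F\in\cD_A$ realising $\rho_A$ and observing $\rk(\sigma(F)F^{-1}-I)\le 2\rho_A$, since each index where $F-I$ vanishes contributes at most two non-trivial ratios $f_{i-1}/f_i$ (at the cyclic transitions between supported and unsupported positions of $F$).

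The main obstacle is establishing $\cC_B=\cD_B$ in the modular case $p=q$, where $(\bbZ/q)[\mathfrak{S}]$ is no longer semisimple and constant vectors can appear in the augmentation ideal; one must then argue directly by transferring coboundary data between $\cD_A$ and $\cD_B$ through the explicit relation $B=\sigma(E)E^{-1}$. A secondary difficulty is that the natural candidate $\sigma(F)F^{-1}$ witnessing the bound $\rho_B\le 2\rho_A$ lives a priori only in $\cC_A$, so matching it up with the cyclic module generated by $B$ inside $\cD_B$ requires careful bookkeeping.
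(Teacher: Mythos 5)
Your route is essentially the paper's: the normal form $DS^{k}$ gives $\cG_A=\cD_A\mathfrak{S}=\mathfrak{S}\cD_A$ and $\cC_A\subseteq\cD_A$, and your reformulation of $\cC_A$ as the image of the coboundary map $D\mapsto\sigma(D)D^{-1}$ on the $(\bbZ/q)[\sigma]$-module $\cD_A$ is a clean packaging of the computation the paper performs by hand. Your case (i) is complete and in fact sharper than the paper's (you get $\rho_A=p-1$, not merely $\rho_A\ge p/2$), and the Maschke argument correctly settles $\cC_B=\cD_B$ whenever $p\ne q$.

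The subcase you flagged as unresolved --- $\cC_B=\cD_B$ when $p=q$ --- is a genuine gap, and you should not expect to close it, because the assertion fails there. Take $p=q=3$ and $A=\diag(\omega,1,1)$ with $\omega$ a primitive cube root of unity. Then $\cD_A$ is the full group of diagonal matrices with cube-root-of-unity entries ($27$ elements), $\cC_A=(\sigma-1)\cD_A$ is its determinant-one subgroup ($9$ elements), so $\cC_A\ne\cD_A$ and $\cC_A$ contains nonscalars; thus alternative (i) is excluded. Now $B=\diag(\omega,\omega^2,1)$ is a nonscalar element of $\cC_A$, and $\cD_B$, being generated by $B$ and its cyclic shifts, is again the whole determinant-one subgroup; but a direct check shows $SDS^{-1}D^{-1}$ is \emph{scalar} for every determinant-one diagonal $D$ here, so $\cC_B=(\sigma-1)\cD_B=\{I,\omega I,\omega^2 I\}$ has $3$ elements while $\cD_B$ has $9$. (In the local ring $\bbF_3[t]/(t^3)$, $t=\sigma-1$: here $B$ corresponds to a generator of $tR$, so $\cD_B=tR$ while $\cC_B=t^2R$.) The paper's own proof of this step rests on the displayed claim $S^{\beta}B^{\alpha}S^{-\beta}=B^{\alpha}\bigl(B^{-\alpha}S^{\beta}B^{\alpha}S^{-\beta}\bigr)\in\cC_B$, which tacitly assumes $B^{\alpha}\in\cC_B$ --- precisely what fails in this example. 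So the obstacle you isolated is a defect of the statement in the modular case, not of your method. Your secondary worry is also well founded and is shared by the paper: the witness $FSF^{-1}S^{-1}$ (for $F\in\cD_A$ realizing $\rho_A$) used to prove $\rho_B\le 2\rho_A$ lies in $\cC_A$ but is not shown to lie in $\cD_B$, so that inequality is not established as written either.
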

\begin{proof} For convenience, we drop the subscript $A$ and will only maintain the subscript $B$ to avoid confusion.
Consider the general word
\begin{equation}\label{e.words}
G=A^{\alpha_1}S^{\beta_1}A^{\alpha_2}S^{\beta_2}\cdots
A^{\alpha_m}S^{\beta_m}\in \mathcal G
\end{equation}
for some integers $m,\alpha_1, \beta_1, \cdots, \alpha_m,
\beta_m$. Since
\begin{equation}
\label{e.diagonal}S^{\beta}
A^{\alpha} S^{-\beta}\in \mathcal D,~\forall \alpha,\beta\in\mathbb{Z},
\end{equation}
it follows that every word of the form (\ref{e.words}) can be
rewritten as
\begin{equation}\label{e.simplifiedwords}
G=DS^\gamma,~{\rm for~some}~ D\in \mathcal D,
\end{equation}
where $\gamma=\beta_1+\beta_2+\cdots+\beta_m$. Now,  $G$ is
diagonal if and only if $\gamma=0~({\rm mod}~p)$. Then
$\mathcal G=\mathcal D\mathfrak S$ and $\mathcal
C\subset\mathcal D$. Since $\mathcal G^{-1}=\mathcal G$, it follows that $\mathcal G=\mathfrak S\mathcal D$.

To prove $(i)$, assume $\mathcal C$ contains no nonscalar matrix. Since  $\mathcal C\neq\{I\}$, there exists $ C=\eta  I$ for some complex number $\eta\neq 1$ and some $C\in\mathcal C$.  It is easy to see that $\omega^q=1$. Also, $\omega^p=\det(C)=1$. Hence,  $q|p$ and thus  $q=p=r$. Since  $\mathcal C$ is a group, $\mathcal C=\{\eta I:~\eta^p=1\}$. Now, if rank$(D-I)=\rho <p/2$, then $D$ and $SD^{-1}S^{-1}$ each have at most  $\rho$ entries different from $1$ and, hence, $DSD^{-1}S^{-1}\neq\eta I$ for some $\eta\in\mathbb C$; a contradiction.

For $(ii)$, assume there exists a nonscalar
$B\in\mathcal C$. Then the subgroup $\mathcal G_B$ of $\mathcal G$  is
nonabelian and  the relations  
(\ref{e.diagonal}) and (\ref{e.simplifiedwords}) can be
sharpened as follows:
\begin{equation}\label{e.diagonal2}S^{\beta} B^{\alpha} S^{-\beta}=
B^{\alpha}B^{-\alpha}S^{\beta}B^{\alpha}~
S^{-\beta}\in \mathcal C_B,~\forall \alpha,\beta\in\mathbb{Z},
\end{equation}
\begin{equation}\label{e.simplifiedwords2}
G=DS^\gamma,~{\rm for~some}~ D\in \mathcal C_B.
\end{equation}
This shows that $\mathcal D_B\subset \mathcal C_B$ which proves $\mathcal D_B= \mathcal C_B$. Since $\det(C)=1$ for all $C\in\mathcal C$, it follows that rank$(D-I)\geq 2$ whenever $I\neq D\in\mathcal D$. The inequality $\rho_B\leq 2\rho_A$ follows from the fact that if rank$(D-I)=\rho$, then rank$(DSD^{-1}S^{-1}-I)\leq 2\rho$ and the rest of $(ii)$ is clear.
\end{proof}
  The next corollary studies the case $\rho=1$. We continue to use the notation established in the previous paragraphs.
\begin{corollary}\label{c.rho1} It is always true that  $2\leq r\leq p$ and, if  $I\neq C\in\mathcal C$, then ${\rm rank}(C-I)\geq 2$. In particular, if $\rho=1$, then one of the following cases holds.
\begin{enumerate}
\item    $r=p=q=2$. In this case, $\mathcal C=\{I,-I\}\subset\mathcal D=\{I,-I,{\rm diag}(1,-1),{\rm diag}(-1,1)\}$.
\item $\mathcal C$ contains nonscalar matrices and for any nonscalar $B\in \mathcal C$, $\rho_B\geq 2$ and $\mathcal  C_B=\mathcal D_B$. The lower bound $2$ is attained for some $B$.
\end{enumerate}
 As a partial converse, if $p=q=2$, then $\rho=1$,  $\mathcal D =\{I,- I,{\rm diag}(1,-1),-{\rm diag}(1,-1)\}$ and $\mathcal C=\{I,- I\}$.
\end{corollary}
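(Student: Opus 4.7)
The plan is to first settle the two \emph{always true} claims, then derive the $\rho=1$ dichotomy by feeding $\rho=1$ into Theorem~\ref{t.DsubsetC}, and finally verify the partial converse by an explicit computation when $p=q=2$.

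For the general statement, the bound $r\le p$ is immediate from matrix size. For $r\ge 2$ and for $\rk(C-I)\ge 2$ when $I\neq C\in \mathcal C$, I reuse the argument from the introduction: any commutator $XYX^{-1}Y^{-1}$ of two unitaries is unitary with determinant one, so if it is not $I$ then after diagonalization at least two of its eigenvalues must differ from $1$. Non-abelianness of $\mathcal G$ (from $A$ nonscalar: if $AS=SA$ then $SAS^{-1}=A$, but $SAS^{-1}$ is a cyclic shift of the diagonal of $A$, forcing $A$ to be scalar) supplies a non-trivial commutator, giving $r\ge 2$. By Theorem~\ref{t.DsubsetC} every $C\in\mathcal C$ is diagonal, and being a product of commutators it has $\det(C)=1$; so if $C\neq I$ at least two of its diagonal entries differ from $1$, whence $\rk(C-I)\ge 2$.

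To handle $\rho=1$, I invoke the dichotomy of Theorem~\ref{t.DsubsetC}. Its case (i) gives $p/2\le\rho\le r=p=q$ and $\mathcal C=\{\eta I:\eta^p=1\}$; combined with $\rho=1$ this forces the prime $p$ to equal $2$, so $r=p=q=2$ and $\mathcal C=\{\pm I\}$. The description of $\mathcal D$ follows by taking without loss of generality $A=\diag(1,-1)$ and observing that $SAS^{-1}=-A$, so the set $\{I,-I,A,-A\}$ is closed under products and exhausts the admissible diagonals. In the theorem's case (ii), for any nonscalar $B\in\mathcal C$ one has $\mathcal C_B=\mathcal D_B$ and $\rho\le\rho_B\le 2\rho=2$; applying the general part of the present corollary inside the subgroup $\mathcal G_B$ (its non-identity diagonals lie in $\mathcal C_B$ and hence have determinant one) forces $\rho_B\ge 2$, so $\rho_B=2$ is attained.

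For the partial converse, assume $p=q=2$. Then $A$ is a non-scalar $2\times 2$ diagonal unitary with entries in $\{1,-1\}$, so up to reordering $A=\diag(1,-1)$ and $\rho=\rk(A-I)=1$. A direct calculation gives $SAS^{-1}=-A$, forcing $\mathcal D=\{\pm I,\pm A\}$, while $[A,S]=A\cdot SA^{-1}S^{-1}=A\cdot(-A)=-I$ yields $\mathcal C=\{I,-I\}$. The main (mild) obstacle in the whole argument is pinning down $\rho_B\ge 2$ in case (ii); this is exactly where $\mathcal C_B=\mathcal D_B$ from Theorem~\ref{t.DsubsetC}(ii) is combined with the determinant-one observation. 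Everything else amounts to bookkeeping on the parameters $p,q,r,\rho$.
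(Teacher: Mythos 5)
Your proof is correct and follows essentially the same route as the paper: determinant/unitarity arguments for the general claims ($r\ge 2$ and $\mathrm{rank}(C-I)\ge 2$ for $I\ne C\in\mathcal C$), the dichotomy of Theorem~\ref{t.DsubsetC} for the case $\rho=1$, and a direct computation for the partial converse $p=q=2$. Two minor remarks: you leave implicit the verification that $\rho=1$ forces $\mathcal C\neq\mathcal D$ (the hypothesis under which the theorem's dichotomy applies), though this is immediate from the rank-$\ge 2$ fact you have just proved; and where the paper shows the bound $\rho_B=2$ is attained by explicitly constructing the commutator $DSD^{-1}S^{-1}$ of a rank-one diagonal $D$, you instead read it off the theorem's inequality $\rho_B\le 2\rho_A$, which is equally valid and in fact yields $\rho_B=2$ for every nonscalar $B\in\mathcal C$.
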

\begin{proof} Observe that if ${\rm rank}(X^{-1}Y^{-1}XY-I)=1$, then $1\neq\det(X^{-1}Y^{-1}XY)= 1$; a contradiction. Thus, $2\leq r\leq p$. Now, if $D\in\mathcal D$ and ${\rm rank}(D-I)=1$, then ${\rm det}(D)\neq 1$ and, hence, $D\notin\mathcal C$. In particular, if $\rho=1$, then $\mathcal D\neq\mathcal C$ and, in view of Theorem \ref{t.DsubsetC}, one of the following cases holds.

{\bf Case 1.} $p/2\leq 1 \leq r= p=q$ which implies that $r=p=q=2$ and $\mathcal D\neq \mathcal C=\{I,-I\}$. Thus $\mathcal D=\{I,- I,{\rm diag}(1,-1),{\rm diag}(-1,1)\}$ is the only choice left.

{\bf Case 2.} There exists a nonscalar $B\in\mathcal C$ and for any such $B$, $\mathcal C_B=\mathcal D_B$ and $\rho_B\geq 2$. Now, if $D\in \mathcal D$ has exactly one diagonal entry different from $1$, then $DSD^{-1}S^{-1}$ is a commutator with exactly two diagonal entries different from $1$.

 Conversely, if $p=q=2$, then ${\rm rank}(C-I)=2$ whenever $I\neq C\in \mathcal C$, which implies that $\mathcal D\neq \{I,-I \} = \mathcal C$. Thus, $\mathcal D=\mathcal C\cup\{{\rm diag}(1,-1),{\rm diag}(-1,1)\}$  and, hence, $\rho=1$. 
\end{proof}

The following theorem studies the case $\rho=2$.
\begin{theorem}\label{t.rho=2} If $\rho=2$, then either
\begin{enumerate}
\item[(i)] $r=p$ and $q>2$,  or
\item[(ii)] $r=p-1$, $q=2$.
\end{enumerate}
\end{theorem}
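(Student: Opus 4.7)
My plan is to analyze $\cG(p,q,A)$ through its diagonal subgroup $\cD$, which I view as an $R$-module over $R=\bbF_q[\bbZ/p]$. Identifying a diagonal matrix whose entries are $q$-th roots of unity with its exponent vector in $(\bbF_q)^p$, and letting $\sigma$ denote the cyclic shift corresponding to conjugation by $S$, the key commutator identity $[D_1 S^\alpha, D_2] = (S^\alpha D_2 S^{-\alpha})D_2^{-1}$ reads additively as $(\sigma^\alpha-1)d_2$, so $(\sigma^\alpha-1)\cD$ lies in the set of commutators for every $\alpha\ne 0$. The proof splits according to the value of $q$.

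For $q=2$, the condition $p=q=2$ is ruled out by Corollary~\ref{c.rho1}, so $p$ is odd. Every commutator is a diagonal $\pm 1$ matrix with determinant $1$, hence has an even number of $-1$'s and rank at most $p-1$. For the matching lower bound, I would show that the $\bbF_2$-span of the cyclic shifts of a weight-$2$ element of $\cD$ is exactly the augmentation subspace $\{v\in(\bbF_2)^p : \sum v_i=0\}$: iterated telescoping $(e_i+e_{i+1})+(e_{i+1}+e_{i+2})=e_i+e_{i+2}$ produces every pair $e_i+e_j$, and pairs span the augmentation. As $\sigma-1$ is invertible on every non-trivial irreducible component of $R$, the commutator set $(\sigma-1)\cD$ then coincides with the augmentation subspace, and its element with all entries $1$ except one zero has weight $p-1$ (even parity since $p$ is odd). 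This gives $r=p-1$.

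For $q>2$, I split on whether $\cC$ contains a nonscalar. If not, Theorem~\ref{t.DsubsetC}(i) forces $p=q$ and $\rho\ge p/2$, so $\rho=2$ yields $p=q=3$ and $r=p=3$ directly. If $\cC$ does contain a nonscalar, then $p\ne q$ so $R$ is semisimple, and Fourier analysis proceeds: a weight-$2$ element $d=ae_i+be_j\in\cD$ has $\hat d_k=a\zeta^{-ki}+b\zeta^{-kj}$, which vanishes at $k\ne 0$ only if $-a/b\in\mu_p\cap\bbF_q$. When $p\nmid q-1$ this intersection is trivial, so $\hat d$ has full support on $k\ne 0$; combined with the observation that $\hat A_0\ne 0$ would give $\cD=(\bbF_q)^p$ and hence $\rho=1$, we conclude $\cD$ equals the full augmentation subspace of $(\bbF_q)^p$. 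The commutator set then contains this subspace, which for $q>2$ has a weight-$p$ element---for instance $(1,\ldots,1,1-p)\bmod q$ if $q\nmid p-1$, or $(1,\ldots,1,2,q-1)$ otherwise---so $r=p$.

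The main obstacle is the residual case $p\mid q-1$ with $q>2$, where $\mu_p\subseteq\bbF_q$ and $\cD$ can sit strictly inside the augmentation subspace. Writing $Z=\{k\ne 0:\hat A_k=0\}$, a weight-$2$ Fourier transform has at most one zero at a non-trivial index, so $|Z|\le 1$, and the commutator set is cut out by at most two Fourier-vanishing conditions ($\hat v_0=0$ together with $\hat v_{k_1}=0$ for some single $k_1$). I would then establish the existence of a weight-$p$ element in this codimension-$2$ subspace by direct parametrization: choose $p-2$ coordinates freely in $\bbF_q^*$ and solve the remaining two from the two linear constraints, verifying that $1-\zeta^{-k_1}\ne 0$ makes the system uniquely solvable and that, since the two bad hyperplanes (where the constrained coordinates would vanish) have density at most $2/q<1$, a suitable choice exists for $q\ge 3$. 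This covers the genuinely delicate step of the proof.
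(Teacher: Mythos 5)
Your overall strategy---encoding the diagonal subgroup $\cD$ as a module over $\bbF_q[\bbZ/p]$ and identifying the set of commutators with $(\sigma-1)\cD$---is a genuinely different and in places cleaner route than the paper's explicit construction of the matrices $\Delta$, $\Gamma$, $\Omega$, and your handling of the cases $q=2$ and $p\nmid q-1$ is sound. There is, however, one genuine gap: the assertion that if the commutator subgroup $\cC$ contains a nonscalar matrix then $p\ne q$. Theorem~\ref{t.DsubsetC}(i) says that if $\cC\ne\cD$ and $\cC$ contains \emph{no} nonscalar then $p=q$; you are invoking the (false) converse. The configuration $p=q\ge 5$ with $\rho=2$ does occur: take $A=\diag(\omega,\bar\omega,1,\dots,1)$ with $\omega$ a primitive $p$-th root of unity. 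Then the exponent module $\cD$ is exactly the augmentation ideal $((x-1))$ of $\bbF_p[x]/(x-1)^p$, so $\rho=2$, while the commutator set $(\sigma-1)\cD=((x-1)^2)$ has dimension $p-2\ge 3$ and hence contains nonscalars. Your Fourier analysis is unavailable here because $x^p-1=(x-1)^p$ over $\bbF_p$, so this case is simply not covered by your argument.

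The gap is repairable inside your own framework: when $p=q$, the only submodule of $\bbF_p[x]/(x-1)^p$ containing a weight-two vector but no weight-one vector is $((x-1))$, and $(\sigma-1)\cD=((x-1)^2)$ contains $(x-1)^{p-1}=1+x+\cdots+x^{p-1}$, i.e.\ the all-ones exponent vector, i.e.\ a nontrivial scalar $\omega I$; this is an honest commutator of rank $p$, so $r=p$ with $q=p>2$, as conclusion (i) requires. A second, smaller point: in the residual case $p\mid q-1$ your density bound ``at most $2/q$'' for the union of the two bad hyperplanes inside $(\bbF_q^*)^{p-2}$ should be justified by counting within $(\bbF_q^*)^{p-2}$ rather than $\bbF_q^{p-2}$ (the crude comparison of a hyperplane's $q^{p-3}$ points against $(q-1)^{p-2}$ fails for large $p$); the correct per-hyperplane density is essentially $1/(q-1)$, and the argument closes because $q\equiv 1\pmod p$ with $p\ge 3$ forces $q\ge 2p+1\ge 7$. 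With these two repairs your proof would be complete and would constitute a legitimate alternative to the paper's constructive argument.
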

\begin{proof} If $p=2$, then $r=2$. Also, $q>2=p$ by Corollary~\ref{c.rho1}.

So, we assume $p\geq 3$. Let $\mathcal D_2$ be the (nonempty) collection of
all matrices $D\in\mathcal D$ such that exactly $p-2$ entries on the main
diagonal of $D$ are  equal to $1$. We claim there exists $\Delta\in \mathcal D_2$ for which exactly the first two diagonal entries are different from $1$. Let $s$ be the minimal positive integer for which there exist a positive integer $h$ and a matrix $D={\rm diag}(\lambda_1,\lambda_2,\cdots,\lambda_p)\in \mathcal D_2$ such that $\lambda_h\neq 1$ and $\lambda_{h+s}\neq 1$. Examining  $S^{-h+1}DS^{h-1}$ and $S^{-h-s+1}DS^{h+s-1}$ reveals that $1\leq s<p/2$ and allows us to assume without loss of generality that $h=1$.  Let $p-1=ms+t$ for some nonnegative integers $m,t$ with $0\leq t\leq s-1$ and, in fact, since $p$ is an odd prime, it follows that either $s=1$ or $0\leq t\leq s-2$.  Let $\lambda_1 = \omega$ and $\lambda_{s+1}=\omega^a\neq 1$ for some primitive $q^{\rm th}$ root $\omega$ of $1$ and some positive integer $a<q$. For $1\leq k\leq m-1$, assume  $\Delta_1,\Delta_2,\cdots,\Delta_k\in\mathcal D_2$ are constructed such that $\Delta_1=D$ and the first and the $(ks+1)^{\rm th}$ diagonal entries of $\Delta_k$ are $\omega^{\epsilon_k}$ and $\omega^{a^k}$, respectively, where $\epsilon_k:=(-1)^{k+1}$.  Define $\Delta_{k+1} =S^{ks} D^{a^k}S^{-ks}\Delta_k^{-1}$. This finite induction yields $\Delta_m\in\mathcal D_2$ whose first diagonal entry is $\omega^{\epsilon_m}$ and whose $(ms+1)^{\rm th}$ diagonal entry is $\omega^{a^m}$ (necessarily, $\neq1$). Now, observe that the first and the $(t+2)^{\rm nd}$ diagonal entries of $S^{t+1}\Delta_mS^{-t-1}\in\mathcal D_2$ are  $\omega^{a^m}$ and $\omega^{\epsilon_m}$, respectively. Since all other entries are equal to $1$  and  $\omega^{\epsilon_m}\neq 1$, it follows that  $\omega^{a^m}\neq 1$. By minimality, $t+2\geq s+1$; hence, $s=1$ and $t=0$. 

Thus, there exists $k\in\{1,2,\cdots,q-1\}$ such that
\begin{equation}\label{e.Delta}
\Delta={\rm diag}(\omega,\omega^k,1,1,\cdots,1)\in\mathcal D_2.
\end{equation}
Let $\Omega:=\Gamma S\Gamma^{-1}S^{-1}\in\cC$, where
\begin{equation}\label{e.Gamma}
\Gamma={\rm diag}(\omega,\omega^k,\omega,\omega^k,\cdots,\omega,\omega^k,1)=\Pi_{j=0}^{(p-3)/2}S^{2j}\Delta S^{-2j}\in\mathcal D.
\end{equation}  Hence
\begin{equation}
\Omega={\rm diag}(\omega,\omega^{k-1},\omega^{1-k},\omega^{k-1},\cdots,\omega^{1-k},\omega^{k-1},\omega^{-k}).
\end{equation}

Let us assume $q\geq 3$ and settle the problem in this case. We claim $k\geq 2$; otherwise, 
$$
S^{-1}\Delta S\Pi_{i=1}^{p-2}S^i\Delta^{(-1)^i}S^{-i}={\rm diag}(\omega^2,1,1,\cdots,1)\in\mathcal D
$$
and rank$(D-I)=1$; a contradiction.  Therefore, $k\geq 2$ and the proof of part $(i)$ follows from the fact that $r={\rm rank}(\Omega-I)=p$.

All we have to do now is settle the case  $p>q=2$. In (\ref{e.Gamma}), $\omega=\omega^k=-1$ and one can deduce that
\begin{equation}
\Delta':=\Delta S\Delta S^{-1}={\rm diag}(-1,1,-1,1,1,\cdots,1)\in\mathcal D_2.
\end{equation}
Choose a positive integer $u$ such that $p=4u\pm 1$. Define  $\Omega' :=\Gamma' S(\Gamma')^{-1} S^{-1}\in\cC$, where 
\begin{equation}\label{e.Gamma'}
\Gamma' ={\rm diag}(-1,1,-1,1,\cdots,-1,1,-1)=\Pi_{j=0}^{u-1}S^{4j}\Delta'S^{-4j}\in\mathcal D.
\end{equation}
Hence,
\begin{equation}
\Omega'={\rm diag}(1,-1,-1,-1,\cdots,-1,-1,-1).
\end{equation}
Since $r\geq{\rm rank}(\Omega'-I)=p-1$, it follows that $p-1\leq r\leq p$. Also, since ${\rm det}(C)=1$ for all $C\in\mathcal C$, it follows that ${\rm rank}(C)\neq p$ and we are done.
\end{proof}
Based on Theorem \ref{t.rho=2}, we can sharpen Corollary \ref{c.rho1} as follows.
\begin{corollary}\label{c.rho1sharp}
If $\rho=1$, then one of the following cases holds.
\begin{enumerate}
\item    $r=p=q=2$. In this case, $\mathcal C=\{I,-I\}\subset\mathcal D=\{I,-I,{\rm diag}(1,-1),{\rm diag}(-1,1)\}$.
\item $p=r$ and $q>2$.
\item $r=p-1$ and $q=2$.
\end{enumerate}
\end{corollary}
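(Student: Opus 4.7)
The strategy is to invoke Corollary~\ref{c.rho1} to reduce to a setting where Theorem~\ref{t.rho=2} applies to an appropriate subgroup. Assuming $\rho_A = 1$, Corollary~\ref{c.rho1} gives two possibilities. The first already matches case~(i) of the present statement verbatim, so nothing needs to be done there. In the remaining case, $\mathcal C_A$ contains a nonscalar element $B$ with $\rho_B = 2$. Because every diagonal entry of every member of $\mathcal G_A$ is a $q$-th root of unity, the subgroup $\mathcal G_B$ is of the form $\mathcal G(p,q,B)$ with the \emph{same} primes $p$ and $q$ as $\mathcal G_A$, so Theorem~\ref{t.rho=2} applies to $\mathcal G_B$ and yields either (a)~$r_B = p$ with $q > 2$, or (b)~$r_B = p - 1$ with $q = 2$.

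It then remains to transfer these conclusions from $\mathcal G_B$ back to $\mathcal G_A$. Since $\mathcal G_B \subseteq \mathcal G_A$ we have $r_A \ge r_B$, while $r_A \le p$ is part of the standing note following the definitions of $\rho$ and $r$. In subcase~(a) this immediately forces $r_A = p$ with $q > 2$, which is case~(ii) of the corollary.

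In subcase~(b), $q = 2$ together with the existence of a nonscalar member of $\mathcal C_A$ rules out $p = 2$ via Corollary~\ref{c.rho1}(i), so $p$ is an odd prime. Every $C \in \mathcal C_A \subseteq \mathcal D_A$ then has diagonal entries in $\{1,-1\}$ with $\det C = 1$, hence an even number of $-1$ entries, so $\rk(C - I)$ is even and at most $p - 1$. Combined with $r_A \ge r_B = p - 1$, this gives $r_A = p - 1$, which is case~(iii). The main (but modest) obstacle in this plan is exactly this last step in subcase~(b): one must rule out the possibility that $r_A$ jumps from $r_B = p - 1$ up to $p$ when lifting from $\mathcal G_B$ to $\mathcal G_A$, and the determinant--parity argument above is what prevents it.
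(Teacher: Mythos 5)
Your proposal is correct and follows essentially the same route as the paper: reduce via Corollary~\ref{c.rho1} to a nonscalar $B\in\mathcal C_A$ with $\rho_B=2$, apply Theorem~\ref{t.rho=2} to $\mathcal G_B$, and transfer $r_B$ to $r_A$ using $r_B\le r_A\le p$. Your determinant--parity argument in the $q=2$ subcase (an even number of $-1$'s forces $\rk(C-I)\le p-1$ for $C\in\mathcal C_A$ when $p$ is odd) is exactly the paper's observation that $r_A=p$ would put $-I$ in $\mathcal C_A$, contradicting $\det=1$.
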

\begin{proof}
Part $(i)$ is the same as Part $(i)$ of  Corollary \ref{c.rho1}. Let $B\in\mathcal C$ be as in Part $(ii)$ of  Corollary \ref{c.rho1} such that $\rho_B=2$. By Theorem \ref{t.rho=2}, we have one of the following cases.

{\bf Case 1.} $r_B=p$ and $q>2$. Then $p\leq r\leq p$ which proves $(ii)$.

{\bf Case 2.} $r_B=p-1$ and $q=2$. Then $r_B$ is even and, hence, $p$ is odd.
If $r$ were equal to $p$, we would have $-I\in\cC$ which is impossible since the determinant of every member of $\cC$ is equal to one. This proves~$(iii)$.
\end{proof}

The following corollary studies the case $r=2$; its easy proof is left to the interested reader.
\begin{corollary}\label{c.requal2}
If $r=2$, then  one of the following cases holds.
\begin{enumerate}
\item[(i)]  $\rho=1$ and $p=q=2$. In this case,  $$\mathcal C=\{I,-I\}\subset\mathcal D=\{I,-I,{\rm diag}(1,-1),{\rm diag}(-1,1)\}.$$
\item[(ii)] $\rho=1$, $p=2$ and $q>2$. In this case,  $$\mathcal C=\{{\rm diag}(\omega,\bar{\omega}):\omega^q=1\}\subset \mathcal D=\{{\rm diag}(\omega,\eta):\omega^q=\eta^q=1\}.$$
\item[(iii)] $\rho=1$, $p=3$ and $q=2$. In this case,
\begin{eqnarray}\mathcal C&=&\{I,{\rm diag}(1,-1,-1),{\rm diag}(-1,1,-1),{\rm diag}(-1,-1,1)\},\\
\mathcal D&=&\mathcal C\cup\{-I,{\rm diag}(-1,1,1), {\rm diag}(1,-1,1),{\rm diag}(1,1,-1)\}.
\end{eqnarray}
\item $\rho=2$, $p=2$ and $q>2$. In this case, $\mathcal C=\mathcal D=\{{\rm diag}(\omega,\eta):\omega^q=\eta^q=1\}.$
\item $\rho=2$, $p=3$ and $q=2$. In this case, $$\mathcal C=\mathcal D=\{{\rm diag}(\omega,\bar{\omega}):\omega^q=1\}.$$
\end{enumerate}
\end{corollary}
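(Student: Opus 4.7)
The plan is a short case analysis in $\rho$, chaining the previously established structural results. Since $1\le\rho\le r=2$, we have $\rho\in\{1,2\}$; each value is then refined further by the earlier classifications.

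First suppose $\rho=1$. Corollary~\ref{c.rho1sharp} gives three possibilities for $(r,p,q)$: either $r=p=q=2$, or $r=p$ with $q>2$, or $r=p-1$ with $q=2$. Substituting $r=2$ into each yields respectively $(p,q)=(2,2)$, $(p,q)=(2,q>2)$, and $(p,q)=(3,2)$, which are cases (i), (ii), (iii) of the present corollary. Next suppose $\rho=2$. Theorem~\ref{t.rho=2} gives two possibilities: $r=p$ with $q>2$, or $r=p-1$ with $q=2$. Setting $r=2$ yields $(p,q)=(2,q>2)$ and $(p,q)=(3,2)$, which are cases (iv) and (v). Hence the five listed cases exhaust all possibilities for the triple $(p,q,\rho)$.

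To identify $\mathcal{C}$ and $\mathcal{D}$ explicitly, I would invoke Theorem~\ref{t.DsubsetC}: $\mathcal{D}$ is generated by the $S$-conjugates $S^kAS^{-k}$, and $\mathcal{C}\subseteq\mathcal{D}\cap\ker(\det)$. In cases (i) and (iii), where $q=2$, entries lie in $\{\pm 1\}$; a $\rho=1$ element together with $S$-conjugation sweeps out the listed $\mathcal{D}$, and $\mathcal{C}$ is its $\det=1$ subgroup. In case (ii), $\rho=1$ and $p=2$ force $\det A\ne 1$, and a short exponent computation in $\bbZ_q^2$ (using $q$ prime) shows that $A$ and $SAS^{-1}$ generate $\{\diag(\omega,\eta):\omega^q=\eta^q=1\}$, with $\mathcal{C}$ the determinant-one subgroup as listed. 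In cases (iv) and (v), $\rho=2$ forces every nontrivial element of $\mathcal{D}$ to lie in $\ker(\det)$, so $\mathcal{D}\subseteq\ker(\det)$ and a direct check gives $\mathcal{C}=\mathcal{D}$ equal to the listed determinant-one diagonal group.

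I expect the main obstacle to be the final bookkeeping, especially in cases (iv) and (v), where pinning down $\mathcal{D}$ amounts to computing the image of the linear map $(k_0,\ldots,k_{p-1})\mapsto\prod_k S^kA^{k_\bullet}S^{-k}$ into the diagonal group modulo the relation $S^p=I$; but all such computations are short once the case split above is in place, reducing in each case to elementary linear algebra in $\bbZ_q^p$, with the only slightly tricky step being the $p=3$ setting where three conjugates must be tracked simultaneously rather than two.
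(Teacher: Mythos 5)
The paper gives no proof of this corollary (it is explicitly left to the reader), but your strategy --- intersecting $r=2$ with the case lists of Corollary~\ref{c.rho1sharp} for $\rho=1$ and of Theorem~\ref{t.rho=2} for $\rho=2$ --- is clearly the intended one, and your enumeration of the five triples $(\rho,p,q)$ is correct. Cases (i)--(iii) are also handled adequately: a diagonal element $D$ with $\mathrm{rank}(D-I)=1$ together with its $S$-conjugates generates the stated $\mathcal D$ (using that $q$ is prime), and $\mathcal C$ is then pinned down between the subgroup generated by the commutators $DSD^{-1}S^{-1}$ and the determinant-one part of $\mathcal D$.

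Two points need attention in cases (iv) and (v). First, your assertion that ``$\rho=2$ forces every nontrivial element of $\mathcal D$ to lie in $\ker(\det)$'' is not immediate: for $p=2$, $q>2$, a matrix $\diag(\alpha,\beta)$ with $\alpha\ne 1\ne\beta$ has $\mathrm{rank}(D-I)=2$ but need not satisfy $\alpha\beta=1$. One must show that if $\mathcal D$ contained such an element, then combining it with its $S$-conjugate (which produces the scalar $\alpha\beta I$) and with a determinant-one element $\diag(\nu,\bar\nu)$ of $\mathcal D$ would yield a diagonal element of rank one, contradicting $\rho=2$; this is a short computation in $\bbZ_q^2$, but it is a genuine step, not a formality. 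Second, and more seriously, your argument, carried out correctly, gives $\mathcal C=\mathcal D=\{\diag(\omega,\bar\omega):\omega^q=1\}$ in case (iv) --- the determinant-one group --- whereas the statement lists $\{\diag(\omega,\eta):\omega^q=\eta^q=1\}$, which contains $\diag(\omega,1)$ and is therefore incompatible with $\rho=2$. You call the listed set ``the listed determinant-one diagonal group,'' which it is not; so your proof as written establishes a (correct) conclusion different from the printed one without acknowledging the discrepancy. (The printed descriptions of (iv) and (v) appear to be garbled: (v) is written in $2\times2$ notation although $p=3$, where the answer is the order-four group of $3\times3$ diagonal $\pm1$ matrices of determinant one, as in your case (iii) computation of $\mathcal C$.) You should either flag the misprint or state explicitly which sets your computation actually produces.
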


%%%%%%%%%%%%%%%%%%%%%%%%%%%%%%%%%%%%%%%%%%%%
%%%%%%%%%%%%%%%%%%%%%%%%%%%%%%%%%%%%%%%%%%%%%%%%%%

\end{document}